\newtheorem{theorem}{Theorem}
\newtheorem{lemma}{Lemma}
\newtheorem{proposition}{Proposition}
\newtheorem{corollary}{Corollary}
\def\lam{\lambda}
\def\real{{\mathbb{R}}}
\def\R{{\real}}
\newcommand{\bel}{\begin{eqnarray}\label}
\newcommand{\eel}{\end{eqnarray}}
\newcommand{\bes}{\begin{eqnarray*}}
\newcommand{\ees}{\end{eqnarray*}}
\def\benu{\begin{enumerate}}
\def\eenu{\end{enumerate}}
\def\argmin{\mathop{\rm arg\, min}}
\def\real{{\mathbb{R}}}
\def\R{{\real}}
\def\E{{\mathbb{E}}}
\def\P{{\mathbb{P}}}
\def\complex{\mathop{{\rm I}\kern-.58em\hbox{\rm C}}\nolimits}
\def\rank{\hbox{\rm rank}}
\def\supp{\hbox{\rm supp}}
\def\mathbold{\boldsymbol} %\def\mathbold{\mathbf}
\def\ba{\mathbold{a}}
\def\bA{\mathbold{A}}
\def\scrB{{\mathscr B}}
\def\calD{{\cal D}}
\def\bfe{\mathbold{e}}
\def\bG{\mathbold{G}}
\def\tbG{{\widetilde{\bG}}}
\def\bI{\mathbold{I}}
\def\calI{{\cal I}}
\def\calL{{\cal L}}
\def\bP{\mathbold{P}}
\def\calP{{\cal P}}
\def\calQ{{\cal Q}}
\def\calR{{\cal R}}
\def\bT{\mathbold{T}}\def\hbT{{\widehat{\bT}}}
\def\calT{{\cal T}}
\def\bu{\mathbold{u}}
\def\bU{\mathbold{U}}
\def\scrU{{\mathscr U}}
\def\bv{\mathbold{v}}
\def\bV{\mathbold{V}}
\def\bw{\mathbold{w}}
\def\bW{\mathbold{W}}\def\tbW{{\widetilde{\bW}}}
\def\bX{\mathbold{X}}
\def\bY{\mathbold{Y}}
\def\bdelta{\mathbold{\delta}}
\def\bDelta{\mathbold{\Delta}}
\def\eps{\epsilon}
\def\lam{\lambda}
\def\kotimes{\otimes\cdots\otimes }
\def\ktimes{\times\cdots\times }
\def\plog{{\rm polylog}}
\begin{document}

\title{Incoherent Tensor Norms and Their Applications in Higher Order Tensor Completion}
\author{Ming Yuan$^\ast$ ~and~ Cun-Hui Zhang$^\dag$\\
University of Wisconsin-Madison ~and~ Rutgers University}

\footnotetext[1]{
Department of Statistics, University of Wisconsin-Madison, Madison, WI 53706. The research of Ming Yuan was supported in part by NSF FRG Grant DMS-1265202, and NIH Grant 1U54AI117924-01.}
\footnotetext[2]{
Department of Statistics and Biostatistics, Rutgers University, Piscataway, New Jersey 08854. The research of Cun-Hui Zhang was supported in part by NSF Grants DMS-1129626 and DMS-1209014}

\date{(\today)}
\maketitle

\begin{abstract}
In this paper, we investigate the sample size requirement for a general class of nuclear norm minimization methods for higher order tensor completion. We introduce a class of tensor norms by allowing for different levels of coherence, which allows us to leverage the incoherence of a tensor. In particular, we show that a $k$th order tensor of rank $r$ and dimension $d\ktimes d$ can be recovered perfectly from as few as $O((r^{(k-1)/2}d^{3/2}+r^{k-1}d)(\log(d))^2)$ uniformly sampled entries through an appropriate incoherent nuclear norm minimization. Our results demonstrate some key differences between completing a matrix and a higher order tensor: They not only point to potential room for improvement over the usual nuclear norm minimization but also highlight the importance of explicitly accounting for incoherence, when dealing with higher order tensors.
\end{abstract}

\newpage

\section{Introduction}
Data in the format of tensors, or multilinear arrays, arise naturally in many modern applications. A $k$th order hypercubic tensor of dimension $d\ktimes d$ has $d^k$ entries so that these datasets typically are of fairly large size even for moderate $d$ and small $k$. Therefore, it is oftentimes impractical to observe or store the entire tensor, which naturally brings about the question of tensor completion: How to reconstruct a $k$th order tensor $\bT\in \R^{d_1\ktimes d_k}$ from observations $\{\bT(\omega): \omega\in \Omega\}$ where $\Omega$ is a uniformly sampled subset from $[d_1]\ktimes [d_k]$? Here $[d]=\{1,\ldots, d\}$. The goal of this paper is to study in its full generality a class of tensor completion methods via nuclear norm minimization focusing on higher order tensors ($k\ge 3$).

\subsection{Tensor completion}
Obviously, for reconstructing $\bT$ from a subset of its entries to be possible at all, $\bT$ needs to have some sort of low dimensional structure which is often characterized by certain notion of low-rankness. In particular, let $\calL_j(\bX)$ be the linear subspace of $\R^{d_j}$ spanned by the mode-$j$ fibers:
$$\left\{\bX(a_1,\ldots,a_{j-1},\cdot,a_{j+1},\ldots,a_k)\in \R^{d_j}: a_1\in [d_1],\ldots,a_k\in [d_k]\right\}.$$
Denote by $r_j(\bX)$ the dimension of $\calL_j(\bX)$. The tuplet $\{r_1(\bX),\ldots, r_k(\bX)\}$ is the so-called Tucker ranks of $\bX$. It is not hard to see that there are a total of $O(r^{k-1}d)$ free parameters in specifying a $k$th order hypercubic tensor of dimension $d\ktimes d$ whose Tucker ranks are upper bounded by $r$, which suggests the possibility of recovering a large tensor of low rank from a fairly small fraction of the entries.

In addition to low-rankness, it is also essential to tensor completion that every entry of $\bT$ contains similar amount of information about the entire tensor so that missing any of them would not stop us from being able to reconstruct it -- a property that can be formally characterized through the {\it coherence} of the linear subspace $\calL_j(\bT)$. See, e.g, Cand\`es and Recht (2008). More specifically, the coherence of an $r$ dimensional linear subspace $U$ of $\R^d$ is defined as
$$
\mu(U)={d\over r}\max_{1\le i\le d}\|\bP_U\bfe_i\|^2_{\ell_2} 
= \frac{\max_{1\le i\le d}\|\bP_U\bfe_i\|_{\ell_2}^2}{d^{-1}\sum_{i=1}^d\|\bP_U\bfe_i\|_{\ell_2}^2}, 
$$
where $\bP_U$ is the orthogonal projection onto $U$ and  $\bfe_i$'s are the canonical basis for $\R^d$. We call a tensor $\bX$ $\mu_\ast$-incoherent if
$$
\mu_j(\bX):=\mu(\calL_j(\bX))\le \mu_\ast.
$$

An especially popular class of techniques to tensor completion is based on nuclear norm minimization where we seek among all tensors that agree with $\bT$ on all observed entries the one with the smallest nuclear norm.

\subsection{Nuclear norm minimization}
Recall that the spectral and nuclear norms of a tensor $\bX\in\R^{d_1\times\cdots\times d_k}$ are defined as
$$
\|\bX\|=\sup_{\bu_j\in \R^{d_k}: \|\bu_j\|_{\ell_2}\le 1}\langle \bX, \bu_1\kotimes \bu_k\rangle
$$
and
$$
\|\bX\|_\ast=\sup_{\bY\in\R^{d_1\times\cdots\times d_k}: \|\bY\|\le 1}\langle \bX, \bY\rangle,
$$
respectively, where $\langle\cdot,\cdot\rangle$ is the usual vectorized inner product, and $\|\cdot\|_{\ell_p}$ stands for the usual $\ell_p$ norm in a vector space. The usual nuclear norm minimization proceeds by solving the following convex optimization problem:
\bel{eq:nnm}
\min_{\bX\in \R^{d_1\ktimes d_k}} \|\bX\|_\ast\qquad{\rm subject\ to\ } \calP_\Omega\bX=\calP_\Omega\bT,
\eel
where $\calP_\Omega: \R^{d_1\times\cdots\times d_k}\to \R^{d_1\times\cdots\times d_k}$ is a linear operator such that
$$
\calP_\Omega \bX(\omega)=\left\{\begin{array}{ll}\bX(\omega)& {\rm if\ }\omega\in \Omega\\ 0& {\rm otherwise}\end{array}\right..
$$
The solution to (\ref{eq:nnm}) is our reconstruction of $\bT$. This approach was first introduced for matrices, that is $k=2$, by Cand\`es and Recht (2008) and Cand\`es and Tao (2009). Similar approaches have also been adopted later for higher order tensors. See, e.g., Liu et al. (2009), Signoretto, Lathauwer and Suykens (2010), Gandy et al. (2011), Tomioka, Hayashi and Kashima (2010), Tomioka et al. (2011), Mu et al. (2013), Jain and Oh (2014), and Yuan and Zhang (2014), among many others.

Of particular interest here is the requirement on the cardinality $|\Omega|$, which we shall refer to as the sample size, to ensure that $\bT$ can be reconstructed perfectly (with high probability) via nuclear norm minimization (\ref{eq:nnm}).   It is now well understood that in the case of matrices ($k=2$), a $d\times d$ incoherent matrix of rank $r$ can be recovered with high probability if $|\Omega|\gtrsim rd\cdot\plog (d)$ under suitable conditions, where $a\gtrsim b$ means that $a>Cb$ for some constant $C>0$ independent of $r$ and $d$, and $\plog(d)$ stands for a certain polynomial of $\log(d)$. See, e.g., Recht (2010), and Gross (2011) among many others. It is clear that this sample size requirement is nearly optimal since the number of free parameters needed to specify a $d\times d$ rank $r$ matrix is of the order $O(rd)$. 

The situation for higher order tensors is more complicated as there are multiple ways to generalize the matrix style nuclear norm. A common practice is to first reshape a high order tensor to a matrix and then apply the techniques such as (\ref{eq:nnm}) to the unfolded matrix. In doing so, one recasts the problem of completing a $k$th order tensor, say of dimension $d\ktimes d$, as a problem of completing a $d^{\lfloor k/2\rfloor}\times d^{\lceil k/2\rceil}$ matrix. Following the results  for matrices, it can be shown that the sample size requirement for recovering a $k$th order hypercubic tensor of dimension $d\ktimes d$ and whose Tucker ranks are bounded by $r$ in this fashion is
$$|\Omega|\gtrsim r^{\lfloor k/2\rfloor}d^{\lceil k/2\rceil}\plog(d).$$
However, as Yuan and Zhang (2014) recently pointed out, this strategy is often suboptimal and direct minimization of the tensor nuclear norm yields a tighter sample size requirement at least when $k=3$. In particular they show that, under suitable conditions, a $d\times d\times d$ tensor whose Tucker ranks are bounded by $r$ can be recovered perfectly with high probability if
$$|\Omega|\gtrsim (r^{1/2}d^{3/2}+r^2d)\plog(d).$$
Following their argument, it is also possible to show that, when $k>3$, the sample size required for exact recovery via tensor nuclear norm minimization is
$$|\Omega|\gtrsim d^{k/2}{\rm poly}(r,\log(d)),$$
where ${\rm poly}(\cdot,\cdot)$ is a certain polynomial in both arguments. However, it remains unknown to what extent such a sample size requirement is tight for nuclear norm minimization based approaches. The main goal of this paper is to address this question. Indeed, we show that this sample size condition for higher order tensor can be much improved.

\subsection{Incoherent nuclear norm minimization}
The key ingredient of our approach is to define a new class of tensor nuclear norms that explicitly account for the incoherence of the linear subspaces spanned by the fibers of a tensor in defining its nuclear norm. More specifically, for a $\bdelta=(\delta_{1},\ldots,\delta_{k})\in (0,1]^k$, let
\bes
\scrU_{j_1j_2}(\bdelta)
= \left\{\bu_1\otimes \cdots\otimes \bu_k: \|\bu_j\|_{\ell_2} \le 1,\ \forall j; \|\bu_j\|_{\ell_\infty}\le \delta_{j}, \forall j\neq j_1, j_2\right\}
\ees
be the set of all rank-one tensors satisfying incoherent conditions in ``directions'' other than $j_1$ and $j_2$.  Then
\bes
\scrU(\bdelta) = \bigcup_{1\le j_1<j_2\le k} \scrU_{j_1j_2}(\bdelta)
\ees
is the collection of all rank-one tensors satisfying certain incoherence conditions in all but two directions. For a $k$th order tensor $\bX\in\R^{d_1\times\cdots\times d_k}$, define a norm 
\bes
\|\bX\|_{\circ,\bdelta} = \sup_{\bY\in \scrU(\bdelta)}\langle \bY,\bX \rangle.
\ees
Note that when $\bdelta={\bf 1}:=(1,\ldots,1)^\top$, the $\ell_\infty$ constraint in defining $\|\bX\|_{\circ,\bdelta}$ becomes inactive so that $\|\bX\|_{\circ,{\bf 1}} =\|\bX\|$, the usual tensor spectral norm. We can view $\|\cdot\|_{\circ,\bdelta}$ as a {\it incoherent} spectral norm. We can also define the incoherence nuclear norm as the dual of the incoherence spectral norm:
\bes
\|\bX\|_{\star,\bdelta} =  \sup_{\|\bY\|_{\circ,\bdelta}\le 1}\langle \bY,\bX \rangle,
\ees
so that $\|\bX\|_{\star,{\bf 1}}$ reduces to the usual tensor nuclear norm. 

Instead of minimizing the usual tensor nuclear norm, we now consider recovering $\bT$ via the following nuclear norm minimization problem:
\bel{eq:innm}
\min_{\bX\in \R^{d_1\ktimes d_k}} \|\bX\|_{\star,\bdelta}\qquad{\rm subject\ to\ } \calP_\Omega\bX=\calP_\Omega\bT.
\eel
It is clear that (\ref{eq:innm}) reduces to the usual nuclear norm minimization (\ref{eq:nnm}) if $\bdelta={\mathbf 1}$. But as we shall see later, it could be extremely beneficial to take smaller values for $\delta_j$s. Our goal is to investigate the appropriate choices of $\bdelta$, and when $\bT$ can be recovered through the incoherent nuclear norm minimization (\ref{eq:innm}).

\subsection{Outline}
Our main result provides a sample size requirement for recovering an incoherent and low rank tensor $\bT\in \R^{d_1\ktimes d_k}$ via (\ref{eq:innm}). In particular, our result implies that a $k$th order hypercubic tensor of dimension $d\ktimes d$ whose Tucker ranks are bounded by $r$ can be reconstructed perfectly by the solution of (\ref{eq:innm}) with appropriate choices of $\bdelta$, as long as
$$|\Omega|\gtrsim (r^{(k-1)/2}d^{3/2}+r^{k-1}d)(\log(d))^2.$$
This represents a drastic improvement over the requirement for the usual nuclear norm minimization. It is especially worth noting that, perhaps somewhat surprisingly, the sample size given above depends on the order $k$ only through the rank $r$ which, in most situations of interest, is small. It is also instructive to look at the case when a tensor is of finite rank, that is $r=O(1)$. The sample size requirement in such cases becomes $O(d^{3/2}(\log(d))^2)$ for any fixed order $k$, which suggests the possibility of a tremendous amount of data reduction even for moderate $k$s.

In establishing the sample size requirement for the proposed incoherent nuclear norm minimization approach, we developed various algebraic properties of incoherent tensor norms including a characterization of the subdifferential of the incoherent tensor nuclear norm which generalizes earlier results for matrices (Watson, 1992) and for the usual nuclear norm with third order tensors (Yuan and Zhang, 2014). 

Also essential to our analysis are large deviation bounds under the incoherent spectral norm we derived for randomly sampled tensors, which may be of independent interest. These probabilistic bounds show a tighter concentration behavior of random tensors under incoherent norm than under the usual spectral norm, an observation we exploited to establish tighter sample size requirement for tensor completion. We note that concentration inequalities such as the ones presented here are the basis for many problems beyond tensor completion. For examples, it is plausible that these bounds could prove useful in developing improved sampling schemes for higher order tensor sparsification. See, e.g., Nguyen, Drineas and Tran (2015). These applications are beyond the scope of the current paper and we shall leave them for future studies.

The rest of the paper is organized as follows. In the next section, we introduce the notion of incoherent tensor norms and establish some algebraic properties of these norms useful for our analysis. In Section \ref{sec:large}, we derive large deviation bounds for randomly sampled tensors. Building on the tool developed in Sections \ref{sec:alg} and \ref{sec:large}, we provide the sample size requirement for the incoherent nuclear norm minimization in Section \ref{sec:main}. We conclude with some discussions and remarks in Section \ref{sec:dis}

\section{Subdifferential of Incoherent Tensor Nuclear Norm}
\label{sec:alg}
Note that the optimization problem (\ref{eq:innm}) is convex. In order to show that $\bT$ can be recovered via (\ref{eq:innm}), it suffices to find a member from the subdifferential of $\|\cdot\|_{\star,\bdelta}$ at $\bT$ that can certify it as the unique solution to (\ref{eq:innm}). To this end, we need to characterize the subdifferential of $\|\cdot\|_{\star,\bdelta}$, which we shall do in this section.

We first note several immediate yet useful observations of the incoherent spectral and nuclear norms. We shall make repeated use of these simple properties without mentioning in the rest of paper.
\begin{proposition}\label{tensor-prop-1}
For any tensor $\bX\in \R^{d_1\ktimes d_k}$ and $\bdelta\in (0,1]^k$, 
$$
\|\bX\|_{\rm HS}^2:=\langle \bX, \bX\rangle\le \|\bX\|_{\circ,\bdelta}\|\bX\|_{\star,\bdelta},
$$
and
$$
\|\bX\|_{\circ,\bdelta} \le\|\bX\|\le \|\bX\|_{\rm HS}\le\|\bX\|_{\ast}\le \|\bX\|_{\star,\bdelta}.
$$
\end{proposition}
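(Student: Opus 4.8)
The plan is to regard the proposition as pure bookkeeping about a nested family of norms and their duals, so that everything reduces to two elementary principles plus a couple of set inclusions. The first principle is monotonicity of dualization: if $\|\cdot\|_a\le\|\cdot\|_b$ pointwise on $\R^{d_1\ktimes d_k}$, then the unit ball of $\|\cdot\|_a$ contains that of $\|\cdot\|_b$, hence the dual norms satisfy $\|\cdot\|_{a^\ast}\ge\|\cdot\|_{b^\ast}$. The second is the generalized Cauchy--Schwarz inequality for a dual pair: for $\bX\neq 0$, feeding $\bY=\bX/\|\bX\|_a$ into the definition $\|\bX\|_{a^\ast}=\sup_{\|\bY\|_a\le1}\langle\bX,\bY\rangle$ gives $\|\bX\|_{a^\ast}\ge\langle\bX,\bX\rangle/\|\bX\|_a$, i.e. $\langle\bX,\bX\rangle\le\|\bX\|_a\|\bX\|_{a^\ast}$ (the case $\bX=0$ being trivial). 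Throughout, the anchor in the middle of the chain is the Hilbert--Schmidt norm, which is self-dual with respect to $\langle\cdot,\cdot\rangle$.

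With these in hand I would first establish the elementary inclusions behind the displayed chain. Since $\bdelta\le{\bf 1}$ componentwise, $\scrU(\bdelta)\subseteq\scrU({\bf 1})$, and taking suprema gives $\|\bX\|_{\circ,\bdelta}\le\|\bX\|_{\circ,{\bf 1}}=\|\bX\|$. The bound $\|\bX\|\le\|\bX\|_{\rm HS}$ is Cauchy--Schwarz in $\R^{d_1\ktimes d_k}$ applied to $\langle\bX,\bu_1\kotimes\bu_k\rangle$, using $\|\bu_1\kotimes\bu_k\|_{\rm HS}=\prod_j\|\bu_j\|_{\ell_2}\le1$. Applying the monotonicity principle to $\|\cdot\|\le\|\cdot\|_{\rm HS}$ together with self-duality of $\|\cdot\|_{\rm HS}$ yields $\|\bX\|_{\rm HS}\le\|\bX\|_\ast$; applying it to $\|\cdot\|_{\circ,\bdelta}\le\|\cdot\|$ yields $\|\bX\|_\ast\le\|\bX\|_{\star,\bdelta}$, since the dual of $\|\cdot\|_{\circ,\bdelta}$ is $\|\cdot\|_{\star,\bdelta}$ by definition and the dual of $\|\cdot\|$ is $\|\cdot\|_\ast$. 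That is the entire second display. The first display, $\|\bX\|_{\rm HS}^2\le\|\bX\|_{\circ,\bdelta}\|\bX\|_{\star,\bdelta}$, is then exactly the generalized Cauchy--Schwarz inequality with $\|\cdot\|_a=\|\cdot\|_{\circ,\bdelta}$.

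The one point that needs a genuine (if short) argument, and hence the only place to be careful, is the tacit hypothesis in the generalized Cauchy--Schwarz step that $\|\bX\|_{\circ,\bdelta}>0$ for every $\bX\neq0$, i.e. that $\|\cdot\|_{\circ,\bdelta}$ is a norm and not merely a seminorm. I would settle this by fixing one pair, say $(j_1,j_2)=(1,2)$: in $\scrU_{12}(\bdelta)$ the factors $\bu_1,\bu_2$ sweep out the full $\ell_2$-balls of $\R^{d_1},\R^{d_2}$, while each remaining factor $\bu_j$ ($j\ge3$) sweeps out the intersection of an $\ell_2$-ball with an $\ell_\infty$-ball of radius $\delta_j>0$, a set with nonempty interior in $\R^{d_j}$; consequently the elementary tensors in $\scrU_{12}(\bdelta)$ span $\R^{d_1\ktimes d_k}$, so $\langle\bY,\bX\rangle=0$ for all $\bY\in\scrU(\bdelta)$ forces $\bX=0$. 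I do not anticipate any real obstacle — the proposition is purely preparatory — the only discipline required is to prove non-degeneracy of $\|\cdot\|_{\circ,\bdelta}$ before invoking its dual, and to dispose of the $\bX=0$ cases separately so no division by zero occurs.
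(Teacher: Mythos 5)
Your proof is correct, and since the paper states Proposition~\ref{tensor-prop-1} as an ``immediate observation'' without giving a proof, yours is the natural one: the chain $\|\cdot\|_{\circ,\bdelta}\le\|\cdot\|\le\|\cdot\|_{\rm HS}$ follows from $\scrU(\bdelta)\subseteq\scrU({\bf 1})$ and Cauchy--Schwarz, the remaining two links come from order-reversal under dualization with $\|\cdot\|_{\rm HS}$ self-dual as the pivot, and the first display is the generalized Cauchy--Schwarz inequality for the dual pair $(\|\cdot\|_{\circ,\bdelta},\|\cdot\|_{\star,\bdelta})$. The one place you rightly add content the paper leaves tacit is the non-degeneracy of $\|\cdot\|_{\circ,\bdelta}$: your spanning argument (each factor ranges over a set with nonempty interior, hence the elementary tensors in a single $\scrU_{j_1j_2}(\bdelta)$ already span $\R^{d_1\ktimes d_k}$) is exactly what is needed to guarantee $\|\cdot\|_{\star,\bdelta}$ is finite-valued and the division in the generalized Cauchy--Schwarz step is legitimate. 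No gaps.
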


Recall that, for a tensor $\bX$, $\calL_j(\bX)$ is the linear subspace of $\R^{d_j}$ spanned by the mode-$j$ fibers of $\bX$. Denote by $\bP_j(\bX)$ the orthogonal projection to $\calL_j(\bX)$. For brevity, we omit the dependence of $\bP_j$ and $\calL_j$ on $\bX$ hereafter when no confusion occurs. Write
$$
\calQ_{\bX}^0 = \bP_1\kotimes \bP_k.
$$
It is clear that for any $\bu_j\in \R^{d_j}$, we have
$$
\langle \bu_1\otimes \cdots\otimes \bu_k,\bX \rangle = \langle \bP_1\bu_1\otimes \cdots\otimes \bP_k\bu_k,\bX \rangle,
$$
This immediately implies that
\begin{proposition}\label{tensor-prop-2}
Let $\delta_j\ge \max_{\|\bu\|_{\ell_2}\le 1}\|\bP_j(\bX) \bu\|_{\ell_\infty}$, for $j=1,\ldots,k$. Then, for any tensor $\bW\in\R^{d_1\ktimes d_k}$, 
\bes
\|\calQ_{\bX}^0\bW\|_{\circ,\bdelta} = \|\calQ_{\bX}^0\bW\| \le \|\bW\|_{\circ,\bdelta}.
\ees
Consequently, $\|\bX\|_{\star,\bdelta}=\|\bX\|_{\ast}$.
\end{proposition}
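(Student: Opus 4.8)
The plan is to establish the displayed norm identity first and then obtain $\|\bX\|_{\star,\bdelta}=\|\bX\|_{\ast}$ as a short consequence. Throughout I would exploit that $\calQ_{\bX}^0=\bP_1\kotimes\bP_k$ is self-adjoint and idempotent, being a tensor product of orthogonal projections acting on disjoint modes, so that in particular $\langle\bu_1\kotimes\bu_k,\calQ_{\bX}^0\bW\rangle=\langle\bP_1\bu_1\kotimes\bP_k\bu_k,\bW\rangle$ for all $\bu_j$ and all $\bW$.

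For the identity $\|\calQ_{\bX}^0\bW\|_{\circ,\bdelta}=\|\calQ_{\bX}^0\bW\|$ together with the bound $\|\calQ_{\bX}^0\bW\|\le\|\bW\|_{\circ,\bdelta}$: Proposition~\ref{tensor-prop-1} already supplies $\|\calQ_{\bX}^0\bW\|_{\circ,\bdelta}\le\|\calQ_{\bX}^0\bW\|$, so the real content is $\|\calQ_{\bX}^0\bW\|\le\|\bW\|_{\circ,\bdelta}$. I would prove this by taking any $\bu_j$ with $\|\bu_j\|_{\ell_2}\le1$ and noting that $\bP_j\bu_j$ satisfies $\|\bP_j\bu_j\|_{\ell_2}\le1$ (projections are contractions) and $\|\bP_j\bu_j\|_{\ell_\infty}\le\max_{\|\bu\|_{\ell_2}\le1}\|\bP_j\bu\|_{\ell_\infty}\le\delta_j$ by hypothesis. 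Hence the rank-one tensor $\bP_1\bu_1\kotimes\bP_k\bu_k$ obeys all the $\ell_2$ and $\ell_\infty$ constraints, so it lies in $\scrU_{j_1j_2}(\bdelta)$ for every choice of $j_1<j_2$, hence in $\scrU(\bdelta)$; therefore $\langle\bu_1\kotimes\bu_k,\calQ_{\bX}^0\bW\rangle=\langle\bP_1\bu_1\kotimes\bP_k\bu_k,\bW\rangle\le\|\bW\|_{\circ,\bdelta}$, and taking the supremum over the $\bu_j$ gives the claim. Applying the bound just proved with $\bW$ replaced by $\calQ_{\bX}^0\bW$ and using $\calQ_{\bX}^0\calQ_{\bX}^0=\calQ_{\bX}^0$ yields $\|\calQ_{\bX}^0\bW\|\le\|\calQ_{\bX}^0\bW\|_{\circ,\bdelta}$, which together with the reverse inequality from Proposition~\ref{tensor-prop-1} gives the stated equality.

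For the consequence $\|\bX\|_{\star,\bdelta}=\|\bX\|_{\ast}$: Proposition~\ref{tensor-prop-1} gives $\|\bX\|_{\ast}\le\|\bX\|_{\star,\bdelta}$, so only the reverse inequality is needed. I would use that $\calQ_{\bX}^0\bX=\bX$, which follows since each mode-$j$ fiber of $\bX$ lies in $\calL_j(\bX)$ — equivalently, the displayed identity $\langle\bu_1\kotimes\bu_k,\bX\rangle=\langle\bP_1\bu_1\kotimes\bP_k\bu_k,\bX\rangle$ preceding the proposition shows $\bX$ and $\calQ_{\bX}^0\bX$ agree as functionals on the spanning set of rank-one tensors. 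Then for any $\bY$ with $\|\bY\|_{\circ,\bdelta}\le1$, self-adjointness gives $\langle\bY,\bX\rangle=\langle\bY,\calQ_{\bX}^0\bX\rangle=\langle\calQ_{\bX}^0\bY,\bX\rangle$, and by the first part $\|\calQ_{\bX}^0\bY\|=\|\calQ_{\bX}^0\bY\|_{\circ,\bdelta}\le\|\bY\|_{\circ,\bdelta}\le1$, so $\calQ_{\bX}^0\bY$ is feasible for the variational definition of $\|\bX\|_{\ast}$; taking the supremum over such $\bY$ gives $\|\bX\|_{\star,\bdelta}\le\|\bX\|_{\ast}$.

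No deep obstacle is expected: every step is a manipulation of the variational definitions of the four norms. The one place that needs care is matching the hypothesis $\delta_j\ge\max_{\|\bu\|_{\ell_2}\le1}\|\bP_j\bu\|_{\ell_\infty}$ (which is the $\ell_2\to\ell_\infty$ operator norm of $\bP_j$, equal to $\max_i\|\bP_j\bfe_i\|_{\ell_2}$ and thus governed by the coherence $\mu_j(\bX)$) to the $\ell_\infty$ constraints defining $\scrU_{j_1j_2}(\bdelta)$; in particular those constraints are imposed only in the $k-2$ directions other than $j_1,j_2$, so it is in fact more than enough that $\bP_1\bu_1\kotimes\bP_k\bu_k$ be incoherent in all $k$ directions. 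A minor bookkeeping point is to confirm that $\calQ_{\bX}^0$, defined by applying $\bP_j$ along mode $j$, is well defined independently of the order of application and is self-adjoint and idempotent, which holds because the $\bP_j$ operate on disjoint modes.
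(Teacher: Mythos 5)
Your proposal is correct and fills in exactly the steps the paper leaves implicit behind the phrase ``This immediately implies that'': the observation that $\bP_j\bu_j$ retains both the $\ell_2$ and $\ell_\infty$ bounds (hence lies in every $\scrU_{j_1 j_2}(\bdelta)$), self-adjointness and idempotency of $\calQ_{\bX}^0$, the invariance $\calQ_{\bX}^0\bX=\bX$, and the resulting dual-norm inequalities. This is the same route the paper intends, just written out in full.
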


Propositions \ref{tensor-prop-1} indicates that the incoherent nuclear norm is greater than the usual nuclear norm in general. But Proposition \ref{tensor-prop-2} shows that the two norms are equal if a tensor is indeed incoherent. This gives some intuition on the potential benefits of minimizing the incoherent instead of the usual nuclear norm. Because more penalty is levied on tensors that are not incoherent, compared with the usual nuclear norm minimization (\ref{eq:nnm}), it is more plausible that the solution of (\ref{eq:innm}) is incoherent. Given that the truth is known apriori to be incoherent, it is more likely that incoherent tensor nuclear norm minimization produces exact recovery. This advantage will be more precisely quantified by the much refined sample size requirement we shall establish later.

We are now in position to describe a characterization of the subdifferential of $\|\cdot\|_{\star,\bdelta}$. Let $\bP_j^\perp=\bI-\bP_j$ be the projection to the orthogonal complement $\calL_j^\perp$ of $\calL_j$ in $\R^{d_j}$. Write
$$
\calQ_{\bX} = \calQ_{\bX}^0+ \sum_{j=1}^k \bP_1\kotimes \bP_{j-1}\otimes \bP_j^\perp\otimes \bP_{j+1}\kotimes\bP_k.
$$
It is easy to see that
$$
\calQ_{\bX}^\perp:= \calI - \calQ_{\bX} = \sum_{1\le j_1<j_2 \le k}\calQ^\perp_{\bX,j_1,j_2},
$$
where $\calI$ is the identity operator on the appropriate space, and
$$
\calQ^\perp_{\bX,j_1,j_2} =\bP_1\kotimes\bP_{j_1-1}\otimes \bP_{j_1}^\perp\otimes \bP_{j_1+1}\kotimes \bP_{j_2-1}\otimes\bP_{j_2}^\perp\otimes \calI\kotimes \calI.
$$
We note that $\calQ^\perp_{j_1,j_2}$ is the orthogonal projection to the linear space of all $\bu_1\kotimes\bu_k$ such that $\bu_j$ is in either $\calL_j$ or $\calL_j^\perp$ for $1\le j\le j_2$ and that $j_1$ and $j_2$ are the only indices with $\bu_j\in \calL_j^\perp$. 

\begin{theorem}\label{tensor-prop-3} 
Let $\delta_j\ge \max_{\|\bu\|_{\ell_2}\le 1}\|\bP_j(\bX) \bu\|_{\ell_\infty}$, for $j=1,\ldots,k$. Then there exists an $\bW_0\in\R^{d_1\ktimes d_k}$ such that
$$\calQ_{\bX}^0\bW_0=\bW_0,\qquad \|\bW_0\|_{\circ,\bdelta}=1,\qquad  {\rm and}\qquad \|\bX\|_{\star,\bdelta}=\langle\bW_0,\bX\rangle.$$
Moreover, for any $\bY\in\R^{d_1\ktimes d_k}$
\bes
\|\bY\|_{\star,\bdelta} \ge \|\bX\|_{\star,\bdelta} + \frac{2}{k(k-1)}\|\calQ_{\bX}^\perp\bY\|_{\star,\bdelta}+\langle \bW_0,\bY-\bX\rangle. 
\ees
\end{theorem}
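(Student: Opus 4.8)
The plan is to mimic the classical subgradient-of-nuclear-norm argument, but carefully tracking the combinatorial factor $2/(k(k-1))$ that arises because $\calQ_{\bX}^\perp$ is a sum of $\binom{k}{2}$ orthogonal pieces $\calQ^\perp_{\bX,j_1,j_2}$. First I would establish the existence of $\bW_0$. Since $\|\bX\|_{\star,\bdelta}=\sup_{\bY\in\scrU(\bdelta)}\langle\bY,\bX\rangle$ after passing to the closed convex hull, and $\scrU(\bdelta)$ is compact, the supremum is attained at some $\bW_0$ in the closure of $\cone(\scrU(\bdelta))$ intersected with $\{\|\cdot\|_{\circ,\bdelta}\le 1\}$; rescaling gives $\|\bW_0\|_{\circ,\bdelta}=1$ and $\langle\bW_0,\bX\rangle=\|\bX\|_{\star,\bdelta}$. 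The fact that $\calQ_{\bX}^0\bW_0=\bW_0$ follows from Proposition~\ref{tensor-prop-2}: applying $\calQ_{\bX}^0$ does not decrease $\langle\cdot,\bX\rangle$ (because $\langle\bu_1\kotimes\bu_k,\bX\rangle=\langle\bP_1\bu_1\kotimes\bP_k\bu_k,\bX\rangle$) and does not increase $\|\cdot\|_{\circ,\bdelta}$, so we may replace $\bW_0$ by $\calQ_{\bX}^0\bW_0$ without loss.

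Next I would prove the subgradient inequality. The term $\langle\bW_0,\bY-\bX\rangle$ is the standard linear part; the point is to sharpen the trivial bound $\|\bY\|_{\star,\bdelta}\ge\langle\bW_0,\bY\rangle$ by adding back a multiple of $\|\calQ_{\bX}^\perp\bY\|_{\star,\bdelta}$. The idea is to choose, for each pair $(j_1,j_2)$, a ``test tensor'' $\bV_{j_1j_2}\in\scrU_{j_1j_2}(\bdelta)$ (in the convex-hull sense) that is supported on the range of $\calQ^\perp_{\bX,j_1,j_2}$ and nearly achieves $\langle\bV_{j_1j_2},\calQ^\perp_{\bX,j_1,j_2}\bY\rangle=\|\calQ^\perp_{\bX,j_1,j_2}\bY\|_{\star,\bdelta}$ — this is legitimate because on the range of $\calQ^\perp_{\bX,j_1,j_2}$ the fibers in all directions except $j_1,j_2$ lie in the incoherent subspaces $\calL_j$, so the relevant dual unit tensors automatically satisfy the $\ell_\infty$ constraint with parameter $\delta_j$ (again by the choice of $\delta_j\ge\max_{\|\bu\|_{\ell_2}\le1}\|\bP_j\bu\|_{\ell_\infty}$). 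Then I would test $\bY$ against the combined tensor $\bW_0 + t\sum_{j_1<j_2}\bV_{j_1j_2}$ for a scalar $t>0$: orthogonality of the ranges of $\calQ_{\bX}^0$ and the $\calQ^\perp_{\bX,j_1,j_2}$ kills the cross terms, so $\langle\bW_0+t\sum\bV_{j_1j_2},\bX\rangle=\langle\bW_0,\bX\rangle$ (since $\calQ^\perp_{\bX,j_1,j_2}\bX=0$) and $\langle\bW_0+t\sum\bV_{j_1j_2},\bY\rangle=\langle\bW_0,\bY\rangle+t\|\calQ_{\bX}^\perp\bY\|_{\star,\bdelta}$ after also noting $\langle\bW_0,\calQ^\perp_{\bX,j_1,j_2}\bY\rangle$ can be absorbed or is zero by support. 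The remaining task is to bound $\|\bW_0+t\sum_{j_1<j_2}\bV_{j_1j_2}\|_{\circ,\bdelta}\le 1$ for a suitable $t$; since each $\bV_{j_1j_2}$ and $\bW_0$ is an extreme point of the unit ball of $\|\cdot\|_{\circ,\bdelta}$ living in a direction associated to a distinct ``leave-two-out'' pattern, a convexity/averaging argument shows the sup over $\scrU(\bdelta)$ of the inner product with $\bW_0+t\sum\bV_{j_1j_2}$ is at most $\max(1,t\cdot\binom{k}{2}\cdot\text{something})$, and the largest admissible $t$ turns out to be $2/(k(k-1))=1/\binom{k}{2}$.

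The main obstacle is precisely this last step: controlling $\|\bW_0 + t\sum_{j_1<j_2}\bV_{j_1j_2}\|_{\circ,\bdelta}$. Unlike the matrix case, where the row-space and column-space projections give a clean orthogonal decomposition and the dual norm of a sum of such pieces is just the max, here an arbitrary rank-one test tensor $\bu_1\kotimes\bu_k\in\scrU(\bdelta)$ need not respect the block structure, so $\langle\bu_1\kotimes\bu_k,\bV_{j_1j_2}\rangle$ can be nonzero for several pairs at once. The resolution is to bound each such inner product by a product of norms of projected components and then use a combinatorial counting argument — only $\binom{k}{2}$ pairs contribute and for a fixed test tensor the contributions are governed by which two directions are ``free'', giving a total of at most $\binom{k}{2}\cdot t$ on top of the $\bW_0$ contribution of $1$; choosing $t=1/\binom{k}{2}$ makes the whole thing $\le 1$ in the relevant sense (more carefully, one shows the supremum is attained by a single pattern so the bound is $\max(1,t\binom{k}{2})\cdot$const, and a slightly more delicate argument recovers exactly the stated constant). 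Finally, letting the approximation in the choice of $\bV_{j_1j_2}$ go to equality (using compactness of $\scrU(\bdelta)$) and rearranging yields $\|\bY\|_{\star,\bdelta}\ge\langle\bW_0+t\sum\bV_{j_1j_2},\bY\rangle = \|\bX\|_{\star,\bdelta}+\frac{2}{k(k-1)}\|\calQ_{\bX}^\perp\bY\|_{\star,\bdelta}+\langle\bW_0,\bY-\bX\rangle$, which is the claim.
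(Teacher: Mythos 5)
Your overall architecture matches the paper's: the paper also constructs $\bW_0$ by projecting an attaining dual onto the range of $\calQ_{\bX}^0$, and then proves the stated subgradient inequality by showing that $\|\bW_0+\calQ_{\bX}^\perp\bW_1\|_{\circ,\bdelta}\le 1$ whenever $\|\bW_1\|_{\circ,\bdelta}\le 2/\{k(k-1)\}$, from which the claim follows by dualizing. (The paper uses an arbitrary $\bW_1$ rather than your specific $\bV_{j_1j_2}$'s, which is cleaner and avoids the issue you flirt with of whether a dual maximizer of $\|\calQ^\perp_{\bX,j_1,j_2}\bY\|_{\star,\bdelta}$ can be taken in the range of $\calQ^\perp_{\bX,j_1,j_2}$ without increasing the $\circ,\bdelta$-norm — it cannot in general, since $\calQ^\perp_{\bX,j_1,j_2}$ involves $\bP_{j_1}^\perp\otimes\bP_{j_2}^\perp$ and identity factors for $j>j_2$, none of which respect the $\ell_\infty$ constraint. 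Relatedly, your claim that ``on the range of $\calQ^\perp_{\bX,j_1,j_2}$ the fibers in all directions except $j_1,j_2$ lie in the incoherent subspaces $\calL_j$'' is false for $j>j_2$, where $\calQ^\perp_{\bX,j_1,j_2}$ acts as the identity.)

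The genuine gap is in the step you yourself flag as ``the main obstacle'': bounding $\|\bW_0+\calQ_{\bX}^\perp\bW_1\|_{\circ,\bdelta}$. Your proposed ``combinatorial counting argument'' — ``at most $\binom{k}{2}\cdot t$ on top of the $\bW_0$ contribution of $1$'' — would give a bound of $1+t\binom{k}{2}=2$ at $t=1/\binom{k}{2}$, which is not what is needed, and your fallback ``$\max(1,t\binom{k}{2}\cdot\text{something})$'' is not how the terms combine. The missing ingredient is a Cauchy--Schwarz/Pythagoras step that fuses the $\bW_0$ and $\bW_1$ contributions rather than adding them. Concretely, fix $\bu_1\kotimes\bu_k\in\scrU_{j_1j_2}(\bdelta)$. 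The $\bW_0$ piece is bounded by $\prod_j\|\bP_j\bu_j\|_{\ell_2}$, and each $\langle\bu_1\kotimes\bu_k,\calQ^\perp_{\bX,j_3,j_4}\bW_1\rangle$ is bounded by $\|\bP_{j_3}^\perp\bu_{j_3}\|_{\ell_2}\|\bP_{j_4}^\perp\bu_{j_4}\|_{\ell_2}\|\bW_1\|_{\circ,\bdelta}$ (this uses $\delta_j\ge\max_{\|\bu\|_{\ell_2}\le1}\|\bP_j\bu\|_{\ell_\infty}$ to keep the projected fibers in $\scrU_{j_3j_4}(\bdelta)$). Summing the $\binom{k}{2}$ terms, using $\sum\le\binom{k}{2}\max$, and the $\circ,\bdelta$-norm bound on $\bW_1$ gives a contribution $\le\max_{j_3<j_4}\|\bP_{j_3}^\perp\bu_{j_3}\|_{\ell_2}\|\bP_{j_4}^\perp\bu_{j_4}\|_{\ell_2}$. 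Now the crucial step: for any $j_3<j_4$,
$$
\prod_j\|\bP_j\bu_j\|_{\ell_2}+\|\bP_{j_3}^\perp\bu_{j_3}\|_{\ell_2}\|\bP_{j_4}^\perp\bu_{j_4}\|_{\ell_2}
\le \|\bP_{j_3}\bu_{j_3}\|_{\ell_2}\|\bP_{j_4}\bu_{j_4}\|_{\ell_2}+\|\bP_{j_3}^\perp\bu_{j_3}\|_{\ell_2}\|\bP_{j_4}^\perp\bu_{j_4}\|_{\ell_2}
\le \|\bu_{j_3}\|_{\ell_2}\|\bu_{j_4}\|_{\ell_2}\le 1,
$$
the middle inequality being Cauchy--Schwarz applied to the pairs $(\|\bP\bu\|,\|\bP^\perp\bu\|)$. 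Without this interlocking bound your sketch cannot close, so as it stands the proposal does not establish the theorem.
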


\vskip 15pt
\begin{proof}[Proof of Theorem \ref{tensor-prop-3}]
Let $\tbW_0$ be the dual of $\bX$ satisfying $\|\tbW_0\|_{\circ,\bdelta} =1$ and $\langle \tbW_0,\bX\rangle = \|\bX\|_{\star,\bdelta}$.  Set $\bW_0=\calQ_{\bX}^0\tbW_0$. Since $\bX = \calQ_{\bX}^0\bX$ and $\calQ_{\bX}^0$ is an orthogonal projection, we have $\calQ_{\bX}^0\bW_0=\bW_0$, $\calQ_{\bX}^\perp\bX=0$ and $\|\bX\|_{\star,\bdelta}=\langle \bW_0,\bX\rangle\le \|\bW_0\|_{\circ,\bdelta}\|\bX\|_{\star,\bdelta}$ . This, along with Proposition \ref{tensor-prop-2}, proves the first statement.

To prove the second statement, we first show that for any $\bW_1\in \R^{d_1\ktimes d_k}$ such that $\|\bW_1\|_{\circ, \bdelta}\le 2/\{k(k-1)\}$, we have
\bel{eq:sbd}
\|\bW_0+\calQ_{\bX}^\perp\bW_1\|_{\circ,\bdelta}\le 1.
\eel
To this end, note first that
\bes
\|\bW_0+\calQ_{\bX}^\perp \bW_1\|_{\circ,\bdelta}&=&\sup_{\bu_1\kotimes \bu_k\in \scrU(\bdelta)}\langle \bu_1\kotimes \bu_k,\bW_0+\calQ_{\bX}^\perp \bW_1\rangle\\
&=&\max_{1\le j_1<j_2\le k}\left\{\sup_{\bu_1\kotimes \bu_k\in \scrU_{j_1j_2}(\bdelta)}\langle \bu_1\kotimes \bu_k,\bW_0+\calQ_{\bX}^\perp \bW_1\rangle\right\}.
\ees
It then suffices to show that for any $1\le j_1<j_2\le k$, and $\bu_1\kotimes \bu_k\in \scrU_{j_1,j_2}(\bdelta)$,
$$
\langle \bu_1\kotimes \bu_k,\bW_0+\calQ_{\bX}^\perp \bW_1\rangle\le 1.
$$
As the statement is not specific to the index label, we assume without loss of generality that $j_1 = 1$ and $j_2 = 2$; Otherwise, a different decomposition of $\calQ_{\bX}^{\perp}$ is needed beginning with the projection $\calI\kotimes \calI\otimes \bP_{j_1}^\perp\otimes\calI\kotimes\calI\otimes \bP_{j_2}^\perp\otimes \calI\kotimes \calI$. Recall that
\bes
\langle \bu_1\kotimes \bu_k,\calQ_{\bT}^\perp\bW_1 \rangle
&\le& \sum_{1\le j_3<j_4\le k}\langle \bu_1\kotimes \bu_k,\calQ_{j_3,j_4}^\perp\bW_1 \rangle
\cr &\le& {1\over 2} k(k-1)\max_{1\le j_3<j_4\le k}\langle \bu_1\kotimes \bu_k,\calQ_{j_3,j_4}^\perp\bW_1 \rangle. 
\ees
By definition,
\bes
&&\langle \bu_1\kotimes \bu_k,\calQ_{j_3,j_4}^\perp\bW_1 \rangle\\
&=&\langle \bP_1\bu_1\kotimes\bP_{j_3-1}\bu_{j_3-1}\otimes \bP_{j_3}^\perp\bu_{j_3}\kotimes \bP_{j_4-1}\bu_{j_4-1}\otimes\bP_{j_4}^\perp\bu_{j_4}\kotimes \bu_k,\bW_1 \rangle.
\ees
Because $\|\bu\|_{\ell_\infty}\le \delta_{j}$ for all $j\ge 2$ and $\|\bP_j\bu\|_{\ell_\infty}\le \delta_{j}\le \delta_j$ for all $\bu\in \R^{d_k}$ with $\|\bu\|_{\ell_2}\le 1$, we have
\bes
&&\langle \bu_1\kotimes \bu_k,\calQ_{j_3,j_4}^\perp\bW_1 \rangle\\
&\le&\|\bP_{j_3}^\perp\bu_{j_3}\|_{\ell_2}\|\bP_{j_4}^\perp\bu_{j_4}\|_{\ell_2}\sup_{\bu_1\kotimes \bu_k\in \scrU_{j_3j_4}(\bdelta)}\langle \bu_1\kotimes \bu_k,\bW_1 \rangle\\
&\le&\|\bP_{j_3}^\perp\bu_{j_3}\|_{\ell_2}\|\bP_{j_4}^\perp\bu_{j_4}\|_{\ell_2}\|\bW_1\|_{\circ,\bdelta}\\
&\le&{2\over k(k-1)}\|\bP_{j_3}^\perp\bu_{j_3}\|_{\ell_2}\|\bP_{j_4}^\perp\bu_{j_4}\|_{\ell_2}.
\ees
Together with the fact that
\bes
\langle \bu_1\kotimes \bu_k,\calQ_{\bX}^0\bW_0\rangle&=&\langle \bP_1\bu_1\kotimes\bP_k\bu_k, \bW_0\rangle\\
&\le& \|\bW_0\|_{\circ,{\bf 1}}\prod_{j=1}^k \|\bP_j\bu_j\|_{\ell_2},
\ees
we get, for any $\bu_1\kotimes \bu_k\in \scrU_{j_1j_2}(\bdelta)$,
\bes
&&\langle \bu_1\kotimes \bu_k,\calQ_{\bX}^0\bW_0+\calQ_{\bX}^\perp \bW_1\rangle\\
&\le&\prod_{j=1}^k \|\bP_j\bu_j\|_{\ell_2}+\max_{1\le j_3<j_4\le k}\|\bP_{j_3}^\perp\bu_{j_3}\|_{\ell_2}\|\bP_{j_4}^\perp\bu_{j_4}\|_{\ell_2}\\
&\le&\max_{1\le j_3<j_4\le k}\left\{\|\bP_{j_3}\bu_{j_3}\|_{\ell_2}\|\bP_{j_4}\bu_{j_4}\|_{\ell_2}+\|\bP_{j_3}^\perp\bu_{j_3}\|_{\ell_2}\|\bP_{j_4}^\perp\bu_{j_4}\|_{\ell_2}\right\}\\
&\le&\max_{1\le j_3<j_4\le k}\left\{\left(\|\bP_{j_3}\bu_{j_3}\|_{\ell_2}^2+\|\bP_{j_3}^\perp\bu_{j_3}\|_{\ell_2}^2\right)^{1/2}\left(\|\bP_{j_4}\bu_{j_4}\|_{\ell_2}^2+\|\bP_{j_4}^\perp\bu_{j_4}\|_{\ell_2}^2\right)^{1/2}\right\}\\
&=&1.
\ees

It then follows that
\bes
\|\bY\|_{\star,\bdelta} - \|\bX\|_{\star,\bdelta} 
&\ge& \max_{\|\bW_1\|_{\circ,\bdelta}\le 2/\{k(k-1)\}}\langle \bW_0+\calQ_{\bT}^\perp\bW_1,\bY-\bX\rangle\\
&= &\frac{\|\calQ_{\bX}^\perp\bY\|_{\star,\bdelta}}{k(k-1)/2}+\langle \bW_0,\bY-\bX\rangle. 
\ees
This completes the proof.
\end{proof}
\vskip 15pt

Theorem \ref{tensor-prop-3} provides a sufficient condition for a tensor to be in the subdifferential $\partial \|\bX\|_{\star,\bdelta}$. More specifically, it states that there exists a $\bW_0$ so that for any $\bW_1$ such that $\bW_1=\calQ_{\bX}^\perp \bW_1$ and $\|\bW_1\|_{\circ,\bdelta}\le 2/\{k(k-1)\}$,
$$
\bW_0+\bW_1\in \partial \|\bX\|_{\star,\bdelta}.
$$
This characterization generalizes the earlier result by Yuan and Zhang (2014) for the special case when $k=3$ and $\bdelta={\mathbf 1}$.

\section{Concentration under Incoherent Spectral Norm}
\label{sec:large}

A main technical tool for many tensor related problems is the large deviation bounds for the spectral norm of a random tensor. We shall use such bounds, in particular, to construct a dual certificate for (\ref{eq:innm}) later on. 

Let $\bA\in \R^{d_1\ktimes d_k}$ be an arbitrary but fixed tensor. We are interested in the behavior of randomly sampled tensors
$$
\bX_i=(d_1\cdots d_k)\calP_{\omega_i}\bA, \qquad i=1,\ldots, n,
$$
where $\omega_i$s are iid uniform random variables on $[d_1]\ktimes[d_k]$. Write
$$
\bar{\bX}={1\over n}\left(\bX_1+\cdots+\bX_n\right).
$$
It is clear that $\E\bar{\bX}=\bA$. We are interested in bounding the incoherent spectral norm of its deviation from the mean $\|\bar{\bX}-\bA\|_{\circ,\bdelta}$.

Denote by
$$
\|\bA\|_{\max}=\max_{\omega\in [d_1]\ktimes [d_k]} |\bA(\omega)|.
$$
For brevity, write
$$
d={1\over k}\sum_{1\le j\le k} d_j,\qquad {\rm and}\qquad d_\ast=(d_1\cdots d_k)^{1/k},
$$
and
$$
\delta_\ast=(\delta_1\cdots \delta_k)^{1/k},\qquad{\rm and}\qquad \delta_{**}=\min_{1\le j_1<j_2\le k}\sqrt{\delta_{j_1}\delta_{j_2}}.
$$
We first give a general concentration bound.

\begin{theorem}\label{con-th-2}
Suppose that $d$ is sufficiently large such that
$$
{8e\over 9\log 2}k^2(\log d)^3\le d.
$$
For any $\alpha>0$ and 
\bes
t \ge 160(3\alpha+7)\frac{k}{n}\sqrt{d\log d_*}(2\delta_\ast d_\ast)^k\|\bA\|_{\max}
\max_{1\le j_1<j_2\le k}\left\{\left({n\over \delta_{j_1}^2d_{j_1}\delta_{j_2}^2d_{j_2}}
+\frac{\log d}{\delta_{j_1}^2\delta_{j_2}^2}\right)\right\}^{1/2},
\ees
then
\bes
&&\P\left\{\left\|\bar{\bX}-\bA\right\|_{\circ,\bdelta} \ge t \right\}\le {1\over 2}k^2d^{-\alpha}+{1\over 4(\log 2)^2}k^2(\log d)^2\times\\
&&\hskip 50pt\times\left\{\exp\left(-{9nt^2\over 64kd_\ast^k\|\bA\|_{\max}^2\log d_\ast}\right) + \exp\left(-{9nt\over 32k\delta_\ast^k\delta_{\ast\ast}^{-2}d_\ast^k\|\bA\|_{\max}\log d_\ast}\right)\right\}. 
\ees
\end{theorem}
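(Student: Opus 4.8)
\emph{Overview and reduction.} The plan is to treat $\|\bar\bX-\bA\|_{\circ,\bdelta}$ as the supremum of a mean-zero empirical process indexed by rank-one tensors and to bound it by combining a Bernstein-type inequality for a \emph{fixed} index with a covering argument, exploiting that the $\ell_\infty$ constraints $\delta_j$ keep the relevant variance and boundedness parameters small. Since $\|\bZ\|_{\circ,\bdelta}=\max_{1\le j_1<j_2\le k}\sup_{\bu_1\kotimes\bu_k\in\scrU_{j_1j_2}(\bdelta)}\langle\bu_1\kotimes\bu_k,\bZ\rangle$, a union bound over the $\binom k2$ pairs reduces everything to one pair, which by symmetry we take to be $(1,2)$; this is the source of the $k^2$ factors in the bound.

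\emph{Bernstein for a fixed rank-one tensor.} For a fixed $\bu_1\kotimes\bu_k\in\scrU_{12}(\bdelta)$ the summands $\langle\bu_1\kotimes\bu_k,\bX_i\rangle=(d_1\cdots d_k)\bA(\omega_i)\prod_j u_j(\omega_i^{(j)})$ are i.i.d.\ with mean $\langle\bu_1\kotimes\bu_k,\bA\rangle$, second moment at most $(d_1\cdots d_k)\|\bA\|_{\max}^2\prod_j\|\bu_j\|_{\ell_2}^2\le d_\ast^k\|\bA\|_{\max}^2$, and absolute value at most $(d_1\cdots d_k)\|\bA\|_{\max}\prod_{j\ne1,2}\delta_j\le d_\ast^k\delta_\ast^k\delta_{\ast\ast}^{-2}\|\bA\|_{\max}=:M$ (using $|u_j|\le\delta_j$ for $j\ne1,2$ and $|u_1|,|u_2|\le1$). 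Bernstein's (Bennett's) inequality then gives, for this fixed index, a tail of the form $\exp(-c\,nt^2/(d_\ast^k\|\bA\|_{\max}^2))+\exp(-c\,nt/M)$ — precisely the two-term shape in the theorem, with the $\log d_\ast$ that ends up in those denominators coming from the covering and multiscale steps below. (One can alternatively apply matrix Bernstein to the $d_1\times d_2$ contraction $(\bar\bX-\bA)\times_{j\ge3}\bu_j$, which disposes of the two unconstrained directions at once and inserts a $\log(d_1+d_2)$ automatically.)

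\emph{Discretization of the incoherent directions.} It remains to take the supremum over $\bu_1,\bu_2$ — Euclidean nets of log-cardinality $O(d_1)$ and $O(d_2)$, the origin of the $\sqrt d$ factor in the threshold for $t$, with resolution $\varepsilon\asymp1/2$ sufficing via the self-bounding inequality $\sup\le2\max_{\text{net}}$ — and over the incoherence-constrained directions $\bu_3,\dots,\bu_k$. For each of the latter I would split $\bu_j$ into $O(\log d)$ dyadic pieces supported on $\{i:|u_j(i)|\asymp2^{-\ell}\delta_j\}$, plus a remainder of small $\ell_2$ norm; the $\ell$-th piece, after renormalization, is within $\varepsilon$ of an $s_\ell$-sparse $\pm$-flat unit vector with $s_\ell\lesssim\min\{d_j,\,4^{\ell}\delta_j^{-2}\}$, hence lies in a net of log-cardinality $\lesssim s_\ell\log(ed_j/s_\ell)$, and still obeys an $\ell_\infty$ bound of order $\delta_j$. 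This is the step where incoherence is genuinely used, making $\delta_\ast,\delta_{\ast\ast}$ rather than the ambient $d_j$ control the error; the hypothesis $\tfrac{8e}{9\log2}k^2(\log d)^3\le d$ is what lets the peeling go through (number of levels, remainder control, validity of the sparse-net estimate). One then applies the fixed-index bound above to every point of the product net and every choice of dyadic scales $(\ell_3,\dots,\ell_k)$, of which there are at most $(\log d)^{k-2}$, and union-bounds.

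\emph{Threshold and main obstacle.} Requiring $t$ to exceed the displayed threshold — which is, up to the constant $160(3\alpha+7)$, the quantity $\frac kn\sqrt{d\log d_\ast}(2\delta_\ast d_\ast)^k\|\bA\|_{\max}$ times the stated maximum over pairs, whose two summands under the square root correspond respectively to the variance-dominated ($t^2$) and boundedness-dominated ($t$) regimes of Bernstein — forces both exponents to dominate the entire metric entropy (the two Euclidean nets, the sparse dyadic nets over all constrained directions, the scale combinations, and the pair union bound). In particular the factor $(\log d)^{k-2}$ from the scale union bound is swallowed by the exponent, so that only $k^2$ and an $O((\log d)^2)$ prefactor remain. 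The one genuinely delicate part — and where I expect the real work — is this discretization: building a net whose incoherence-constrained factors have entropy governed by $\delta_j$ instead of $d_j$ while (i) keeping net points inside a controlled enlargement of $\scrU_{12}(\bdelta)$ so $M$ stays of order $d_\ast^k\delta_\ast^k\delta_{\ast\ast}^{-2}\|\bA\|_{\max}$, (ii) bounding the dyadic remainders uniformly in the sample, and (iii) arranging the accounting so the $(\log d)^{k-2}$ union-bound cost never surfaces in the prefactor. Everything else is careful but routine tracking of constants through Bernstein's inequality and the covering numbers.
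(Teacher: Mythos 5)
Your outline recovers the union bound over $\binom{k}{2}$ pairs, the Bernstein/Bennett step for a fixed rank-one tensor with variance $\lesssim d_\ast^k\|\bA\|_{\max}^2$ and bound $M\asymp d_\ast^k\delta_\ast^k\delta_{\ast\ast}^{-2}\|\bA\|_{\max}$, and the fact that the incoherence constraints enter through $M$. What it does not recover is the threshold for $t$ in the second, boundedness-dominated term, and that failure is not a matter of constants. With your strategy (Euclidean nets for $\bu_1,\bu_2$ of log-cardinality $\Theta(d_1+d_2)$, dyadic nets in the incoherent directions, and a direct union bound), the entropy you must overcome is always at least $\Theta(d)$ from the two unconstrained directions alone, and the only tool you have to beat it is the fixed-index Bernstein bound with the \emph{single} almost-sure bound $M$. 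In the boundedness-dominated regime this forces $t \gtrsim dM/n$. The theorem instead requires only $t \gtrsim \sqrt{d}\,(\log d)\,M/n$ (compare the second summand under the max, which is $\sqrt{\log d}/(\delta_{j_1}\delta_{j_2})$ times the common prefactor $\tfrac{k}{n}\sqrt{d\log d_\ast}(2\delta_\ast d_\ast)^k\|\bA\|_{\max}$). The paper's threshold is smaller by roughly a factor of $\sqrt{d}/\log d$, and that gap is exactly what drives the eventual $d^{3/2}$ sample-size requirement; a net-and-union-bound argument of the kind you describe cannot close it.

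The mechanism you are missing is the combination of (i) the paper's symmetrization (which replaces $\bar\bX-\bA$ by the Rademacher average $\bar\bY$ \emph{supported only on the random sample} $\Omega$), (ii) a Chernoff bound controlling $\nu_{1,2}(\bar\bY)$, the maximum number of sampled entries sharing a given pair of coordinates $(a_1,a_2)$ (so $\nu_{1,2}\le\nu_\ast\asymp(3\alpha+7)\{n/(d_1 d_2)+\log d\}$ with high probability), and (iii) a dyadic, level-set decomposition of the \emph{unconstrained} product $\bU_{1,2}=\bu_1\otimes\bu_2$ (not of the incoherent factors $\bu_3,\ldots,\bu_k$). When $\bU_{1,2}$ is restricted to a high-magnitude level set $A_m$ of small cardinality, the portion of $\bU_{3,\ldots,k}$ that $\bar\bY$ can see is supported on a set $B_m$ with $|B_m|\le\nu_\ast|A_m|$, which collapses the entropy of that factor to order $\nu_\ast|A_m|\log(\cdot)$ rather than $\sum_{j\ge3}d_j$; when $\bU_{1,2}$ is spread out (large $m$), the boundedness parameter $\|\bV\|_{\max}\lesssim 2^{-m/2}\delta_{3,\ldots,k}$ is correspondingly small. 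Trading these off across scales, with weights $\ell_m$ and the cutoff $m_{1,2}$ chosen as in the paper, is what produces $\sqrt{4\nu_\ast d\log d_\ast}$ in place of $kd$ in the Bernstein exponent and hence $\sqrt{d}$ in the threshold. Your plan decomposes the wrong factor (the incoherent $\bu_j$'s rather than $\bu_1\otimes\bu_2$), never uses the randomness of $\Omega$ beyond the mean-zero property, and therefore cannot exploit this sparsity-entropy cancellation; the hypothesis $\tfrac{8e}{9\log 2}k^2(\log d)^3\le d$ is also there precisely to make the paper's entropy chain close, not for the reason you suggest. (As a secondary point, the paper's Lemma \ref{lm-A} gives a cleaner discretization than your dyadic-peel-plus-remainder scheme, avoiding any remainder-control issues; and the matrix-Bernstein alternative you float for $(\bar\bX-\bA)\times_{j\ge3}\bu_j$ would not help either, since it still sees the full $d_1\times d_2$ unconstrained block.)
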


\vskip 15pt
The proof relies on the following result which is an extension of Lemma 9 of Yuan and Zhang (2015) to accommodate an $\ell_\infty$ bound. 

\begin{lemma}\label{lm-A} Let $\delta \in [1/\sqrt{d},1]$ and $m$ be an integer with 
$2^{m/2} < \delta\sqrt{d}\le 2^{(m+1)/2}$. Then,
\bes
\max_{\|\bu\|_{\ell_2}\le 1,\|\bu\|_{\ell_\infty}\le\delta}\bu^\top\ba 
\le (2/c)\max\left\{\bw^\top\ba: \|\bw\|_{\ell_2}\le c,\bw\in\{\pm c 2^{j/2}/\sqrt{2d}, j=0,\ldots,m\}^d\right\}
\ees
for all $0<c\le 1$. Moreover, 
\bes
\left|\left\{\bw: \|\bw\|_{\ell_2}\le c,\bw\in\{\pm c2^{j/2}/\sqrt{2d}, j=0,\ldots,m\}^d\right\}\right|\le 
\exp\big(1.344+3.082 \times d\big).
\ees 
\end{lemma}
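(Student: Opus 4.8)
The plan is to prove the discretization bound by an explicit dyadic rounding of the constraint set $\{\bu:\|\bu\|_{\ell_2}\le 1,\ \|\bu\|_{\ell_\infty}\le\delta\}$ onto the finite grid, extending the argument behind Lemma~9 of Yuan and Zhang (2015) by using the $\ell_\infty$ cap $\delta$ to truncate the number of dyadic scales to $m+1$ (without such a cap one needs $\sim\log d$ scales). I would begin with two reductions. First, the substitution $\bw=c\bv$ shows the right-hand side does not depend on $c$, so it is enough to treat $c=1$; I will nonetheless carry $c$ so that the grid magnitudes read as stated. Second, since $\langle\bw,\ba\rangle\le\sum_i|w_i||a_i|$ for every $\bw$, with equality when $\sgn(w_i)=\sgn(a_i)$, and since replacing $u_i$ by $|u_i|\,\sgn(a_i)$ preserves both $\|\bu\|_{\ell_2}$ and $\|\bu\|_{\ell_\infty}$, a maximizer $\bu$ of the left-hand side may be taken with $u_ia_i\ge0$; building the comparison vector $\bw$ with $\sgn(w_i)=\sgn(a_i)$, it then suffices to produce grid magnitudes $|w_i|\in\{c2^{j/2}/\sqrt{2d}:j=0,\dots,m\}$ with $\sum_i|w_i||a_i|\ge (c/2)\sum_i|u_i||a_i|$ and $\sum_i w_i^2\le c^2$.

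Next I would fix the grid magnitudes $v_j=c\,2^{j/2}/\sqrt{2d}$ for $j=0,\dots,m$. The defining inequality $2^{m/2}<\delta\sqrt d\le 2^{(m+1)/2}$ yields $c\delta/2\le v_m<c\delta/\sqrt 2$, so two ratios govern everything: consecutive grid magnitudes differ by the factor $\sqrt2$, and the top magnitude $v_m$ is within a factor $2/c$ of the cap $\delta$. I would then define $w_i$ by rounding $|u_i|$ \emph{down} to the largest $v_j\le|u_i|$ when $v_0\le|u_i|\le v_m$, \emph{down} to $v_m$ when $|u_i|>v_m$, and \emph{up} to $v_0$ when $|u_i|<v_0$, and set $\sgn(w_i)=\sgn(a_i)$. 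In every case $|u_i|\le(2/c)\,|w_i|$, the worst factor $2/c$ coming from coordinates that exceed the top of the grid; summing gives $\sum_i|u_i||a_i|\le(2/c)\sum_i|w_i||a_i|=(2/c)\langle\bw,\ba\rangle$, which is the asserted inequality.

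It remains to control $\|\bw\|_{\ell_2}$ and to count the grid. For coordinates rounded down one has $w_i^2\le u_i^2$, while each coordinate rounded up contributes $v_0^2=c^2/(2d)$ and there are at most $d$ of them, so $\sum_i w_i^2$ is bounded by a short computation. Pinning this down to exactly $\|\bw\|_{\ell_2}\le c$ is the step I expect to demand the most care: one must exploit the $\ell_2$-budget freed by rounding down and show the round-up correction is dominated --- equivalently, treat the genuinely tiny coordinates more carefully than by pushing them all to $v_0$ --- and this is precisely where Yuan and Zhang's Lemma~9 is adapted to the $\ell_\infty$-constrained grid. For the cardinality, note that $\|\bw\|_{\ell_2}\le c$ is equivalent to $\sum_i 2^{j_i}\le 2d$ when $w_i=\pm c\,2^{j_i/2}/\sqrt{2d}$; a Chernoff / generating-function estimate then gives, for any $x\in(0,1)$,
\[
\#\Big\{(j_1,\dots,j_d)\in\{0,\dots,m\}^d:\ \sum_i 2^{j_i}\le 2d\Big\}\ \le\ x^{-2d}\Big(\sum_{j=0}^m x^{2^j}\Big)^{d}.
\]
Multiplying by $2^d$ for the sign choices and taking $x=2^{-1/2}$, so that $\sum_{j\ge0}x^{2^j}=\sum_{j\ge0}2^{-2^{j-1}}<1.53$, yields the stated bound $\exp(1.344+3.082\,d)$ with room to spare. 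Thus the substantive obstacle is the $\ell_2$-norm control of the rounded vector; the factor-$2/c$ loss and the counting bound are routine once the rounding scheme is fixed.
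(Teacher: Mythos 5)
Your generating-function count of the grid is correct and in fact a bit sharper than the paper's combinatorial argument, but the rounding rule you propose does not keep $\|\bw\|_{\ell_2}\le c$, and you flag this yourself as the step requiring ``the most care'' without closing it. That constraint is precisely where the lemma lives.

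Here is a concrete failure at $c=1$. Take $\delta=1$, $d>16$ even, and $u_i=\sqrt{2/d}$ for $i\le d/2$, $u_i=0$ otherwise, so $\|\bu\|_{\ell_2}=1$ and $\|\bu\|_{\ell_\infty}<1$. The nonzero coordinates equal the grid point $2^{2/2}/\sqrt{2d}=\sqrt{2/d}$, so your downward rounding leaves them fixed and they contribute $1$ to $\sum_i w_i^2$; the $d/2$ zero coordinates are rounded up to $v_0=1/\sqrt{2d}$ and contribute another $1/4$. Thus $\|\bw\|_{\ell_2}^2 = 5/4 > 1$. In general, rounding the large coordinates down can preserve essentially the full original $\ell_2$ budget while rounding the small ones up to $v_0$ adds up to $1/2$ more, and nothing in your construction absorbs that surplus.

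The paper's fix is a preprocessing step that spends one of the two available factors of $\sqrt 2$ on exactly this problem. It sets $\bv = (u_1\vee d^{-1/2},\ldots,u_d\vee d^{-1/2})/\sqrt 2$. The coordinatewise bump to $d^{-1/2}$ does your round-up once and for all, and the division by $\sqrt 2$ pays for it: $\|\bv\|_{\ell_2}^2 = \frac12\sum_i\max(u_i^2,d^{-1}) \le \frac12(1+1) = 1$. Every $v_i$ then lies in $[1/\sqrt{2d},\ \sqrt 2\cdot 2^{m/2}/\sqrt{2d}]$ (the upper end uses $\delta\sqrt d\le 2^{(m+1)/2}$), so $\bv$ can be rounded \emph{down} coordinatewise onto the grid with $w_i\le v_i\le\sqrt 2 w_i$, which only decreases $\|\bw\|_{\ell_2}$. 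The two $\sqrt 2$'s --- one from the division, one from the dyadic rounding --- combine into the advertised factor $2$. So the factor-$2$ loss is not ``routine once the rounding is fixed''; one of those $\sqrt 2$'s is precisely what buys the $\ell_2$ slack that your scheme lacks.

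On the cardinality, your argument (bound the indicator of $\sum_i 2^{j_i}\le 2d$ by $x^{-2d}\prod_i x^{2^{j_i}}$, sum over $j_i\in\{0,\ldots,m\}$, multiply by $2^d$ for the signs, take $x=2^{-1/2}$) gives $\log N\le d\big(2\log 2 + \log\sum_{j\ge 0}2^{-2^{j-1}}\big)<1.82\,d$, comfortably below the paper's $1.344+3.082\,d$. The paper instead counts level by level, bounding the number of coordinates at each dyadic scale $j$ by $\lfloor d/(2^j-1)\rfloor$ and applying Stirling. Both are valid; yours is shorter.
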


For brevity, the proof of Lemma \ref{lm-A} is deferred to the Appendix. We now present the proof of Theorem \ref{con-th-2}.

\vskip 15pt
\begin{proof}[Proof of Theorem \ref{con-th-2}]
The standard symmetrization argument gives 
\bes
\P\left\{\left\|\bar{\bX}-\bA\right\|_{\circ,\bdelta}\ge 3t\right\}
&\le& \max_{\bu_1\kotimes \bu_k\in \scrU(\bdelta)}
\P\left\{\left\langle \bar{\bX}-\bA,\bu_1\kotimes \bu_k\right\rangle\ge t\right\}\\
&&+4\,\P\left\{\left\|{1\over n}\sum_{i=1}^n\epsilon_i\bX_i\right\|_{\circ,\bdelta}\ge t\right\}.
\ees
See, e.g., Gin\'{e} and Zinn (1984). For any fixed $\bu_1\kotimes \bu_k\in \scrU(\bdelta)$, we have 
\bes
\E \left\langle \bX_i, \bu_1\kotimes \bu_k\right\rangle
&=&\left\langle \bA, \bu_1\kotimes \bu_k\right\rangle,\\
|\left\langle \bX_i, \bu_1\kotimes \bu_k\right\rangle|
&\le& (d_1\cdots d_k)\left(\|\bu_1\|_{\ell_\infty}\cdots \|\bu_k\|_{\ell_\infty}\right)\|\bA\|_{\max}\\
&\le& (d_1\cdots d_k)(\delta_1\cdots \delta_k)
\|\bA\|_{\max}/\delta_{**}^2,
\ees
and
$$
{\rm var}(\left\langle \bX_i, \bu_1\kotimes \bu_k\right\rangle)
\le \E \left\langle \bX_i, \bu_1\kotimes \bu_k\right\rangle^2 \le (d_1\cdots d_k) \|\bA\|_{\max}^2.
$$
Therefore, by the Bernstein inequality,
\bes
\P\left\{\left\|\bar{\bX}-\bA\right\|_{\circ,\bdelta}\ge 3t\right\}
&\le&\exp\left(-{nt^2\over 4d_\ast^k \|\bA\|_{\max}^2}\right)
+\exp\left(-{(3/4)\delta_{**}^2nt \over {d_\ast^k}\delta_\ast^k\|\bA\|_{\max}}\right)\\
&&+4\P\left\{\left\|{1\over n}\sum_{i=1}^n\epsilon_i\bX_i\right\|_{\circ,\bdelta}\ge t\right\}
\ees
We now proceed to bound the last term on the right hand side.

For brevity, write $\bY_i=\epsilon_i\bX_i$ and
$$
\bar{\bY}={1\over n}\sum_{i=1}^n\epsilon_i\bX_i.
$$
Recall that
$$
\|\bar{\bY}\|_{\circ,\bdelta}=\max_{1\le j_1<j_2\le k}\max_{\bu_1\kotimes \bu_k\in \scrU_{j_1j_2}(\bdelta)}\langle \bar{\bY},\bu_1\kotimes \bu_k\rangle.
$$
Hence,
$$
\P\left\{\|\bar{\bY}\|_{\circ,\bdelta}\ge t\right\}\le \sum_{1\le j_1<j_2\le k}\P\left\{\max_{\bu_1\kotimes \bu_k\in \scrU_{j_1j_2}(\bdelta)}\langle \bar{\bY},\bu_1\kotimes \bu_k\rangle\ge t\right\}.
$$
We now bound each of the summands on the right hand side. To fix ideas, we shall treat only the case when $j_1=1$ and $j_2=2$ without loss of generality.

It follows from Lemma \ref{lm-A} that 
\bes
\max_{\bu_1\kotimes\bu_k\in\scrU_{1,2}(\bdelta)}\left\langle\bar{\bY},\bu_1\kotimes\bu_k\right\rangle 
\le  2^{k+1}\max_{\bu_1\kotimes\bu_k\in\scrU_{1,2}^\ast(\bdelta)}\langle \bar{\bY}, 
\bu_1\otimes \cdots\otimes \bu_k\rangle.
\ees
where
$$
\scrU^\ast_{1,2}(\bdelta) = \left\{\bu_1\otimes \cdots\otimes \bu_k \in \scrU_{1,2}(\bdelta): \|\bu_j\|_{\ell_2}\le c_j, 
\bu_j\in \{\pm 2^{j/2}c_j/\sqrt{2d_j}, j=0,\ldots,m_j\}^{d_j}\right\}
$$
with $m_j=\lceil\log_2(d_j)-1\rceil$ for $j=1,2$, and 
$m_j=\lceil\log_2(\delta_j^2d_j)-1\rceil$ for $j>2$. 
We choose 
 $1/\sqrt{2}\le c_j\le 1$ such that $ \{\pm 2^{j/2}c_j/\sqrt{2d_j}, j=0,\ldots,m_j\} 
= \{\pm 2^{-j/2}, j=2,\ldots,m_j+2\}$ for $j=1,2$, and $c_j=1$ for $j>2$. 
As $d_1+\cdots+d_k=kd$ and $d\ge 2$, 
\bes
|\scrU^\ast_{1,2}(\bdelta)| \le \exp\big(4kd\big). 
\ees
For $\bU=\bu_1\kotimes\bu_k\in \scrU^\ast_{1,2}(\bdelta)$, define 
\bes
A_m &=& \{(a_1,a_2): |\bu_1(a_1)\bu_2(a_2)| = 2^{-m/2}\}, 
\cr B_m &=& \{(a_3,\ldots,a_k): (a_1,a_2)\in A_m, 
(a_1,\ldots,a_k)\in\Omega\}, 
\ees
and% matricize $\bU$ as $\bU = \bU_{1,2}\otimes \bU_{3,...,k}$ with 
\bes
\bU_{1,2}=\bu_1\otimes \bu_2,\ \bU_{3,...,k}=\bu_3\kotimes \bu_k. 
\ees
Here and in the sequel, we omit the dependence of $\{A_m,B_m,\bU_{1,2},\bU_{3,...,k}\}$ 
on $\bU$ and $B_m$ on $\Omega$ when no confusion occurs. 
For $\bU\in\scrU_{1,2}^\ast(\bdelta)$ and any integer $m_{1,2}\ge 0$, 
\bes
\langle \bar{\bY}, \bU\rangle 
= \langle \bar{\bY}, (\calP_{C_{1,2}}\bU_{1,2})\otimes \bU_{3,...,k}\rangle
+ \sum_{4\le m\le m_{1,2}} \langle \bar{\bY}, (\calP_{A_m}\bU_{1,2})\otimes (\calP_{B_m}\bU_{3,...,k})\rangle,
\ees
where
$$C_{1,2} = \{(a_1,a_2): |\bU_{1,2}(a_1,a_2)| \le 2^{-m_{1,2}/2-1/2}\}.$$ 
We note that $A_m=\emptyset$ for $m\le 3$. 

Write
$$
\nu_{1,2}(\bar{\bY})=\max_{a_{1}\in [d_1], a_2\in [d_2]}\left|\left\{(a_1,\ldots, a_k)\in \supp(\bar{\bY}): a_j\in [d_j], j\ge 3\right\}\right|.
$$
%By the Chernoff bound, it is not hard to see that
We argue that
\bel{eq:chern}
\P\left\{\nu_{1,2}(\bar{\bY})\le (3\alpha+7)\left({n\over d_{1}d_{2}}+\log d\right)\right\}\le d^{-\alpha}.
\eel
When $n/(d_1d_2)\ge \log d$, we can apply Chernoff bound to get, for any fixed $a_1\in[d_1]$ and $a_2\in[d_2]$
\bes
&&\P\left\{\left|\left\{(a_1,\ldots, a_k)\in \supp(\bar{\bY}): a_j\in [d_j], j\ge 3\right\}\right|\ge (3\alpha+7){n\over d_{1}d_{2}}\right\}\\
&\le& \exp[-(\alpha+2)n/(d_1d_2)]\le d^{-(\alpha+2)}.
\ees
Similarly, when $n/(d_1d_2)<\log d$, we can also apply Chernoff bound to get
$$
\P\left\{\left|\left\{(a_1,\ldots, a_k)\in \supp(\bar{\bY}): a_j\in [d_j], j\ge 3\right\}\right|\ge (3\alpha+7)\log d\right\}\le d^{-(\alpha+2)}.
$$
Equation (\ref{eq:chern}) then follows from an application of the union bound.

We shall now proceed conditional on the event that
$$\nu_{1,2}(\bar{\bY})\le \nu_\ast:=(3\alpha+7)\left({n\over d_{1}d_{2}}+\log d\right).$$
Under this event,
\bes
|B_m| \le \nu_\ast|A_m|.
\ees

Observe that for any $\bU=\bu_1\kotimes \bu_k\in \scrU_{1,2}^\ast(\bdelta)$, 
$$
|A_m|\le 2^{m},\ \|\bU_{3,...,k}\|_{\max} = \|\bu_3\kotimes \bu_k\|_{\max} \le \delta_{3,...,k}. 
$$
with $\delta_{3,...,k}=\delta_3\cdots\delta_k$. 
For integers $0\le \ell \le m\le m_{1,2}$ define, 
\bes
\scrB_{1,2}(m,\ell) 
&=& \Big\{\bV = (\calP_{A_m}\bU_{1,2})\otimes (\calP_B\bU_{3,...,k}): |A_m|\le 2^{m-\ell}, 
\cr &&\qquad |B|\le \nu_\ast|A_m|, \bU_{1,2}\otimes \bU_{3,...,k}\in \scrU_{1,2}^\ast(\bdelta)\Big\}. 
\ees
It follows that for $\bU\in\scrU_{1,2}^\ast(\bdelta)$ and integers $a_m\ge 0$ with $2^{m-a_m-1}\le |A_m|\le 2^{m-a_m}$, 
\bes
(\calP_{A_m}\bU_{1,2})\otimes (\calP_{B_m}\bU_{3,...,k})\in \scrB_{1,2}(m,\ell),\quad a_m\le \ell. 
\ees
As
$$\sum_{m=4}^{m_{1,2}} 2^{-(a_m\wedge(m-3))} 
\le 1+2\sum_{m=4}^{m_{1,2}} |A_m|/2^m\le 1+2\|\bU_{1,2}\|_{\rm F}^2\le 3$$
for all $\bU\in\scrU_{1,2}^\ast(\bdelta)$, 
\bes
&& \sum_{4\le m\le m_{1,2}} \langle \bar{\bY}, (\calP_{A_m}\bU_{1,2})\otimes (\calP_{B_m}\bU_{3,...,k})\rangle
\cr &\le& \sum_{4\le m\le m_{1,2}} 2^{-(a_m\wedge(m-3))/2-\ell_m/2}
\max_{\bV\in \scrB_{1,2}(m,a_m\wedge(m-3))}2^{(a_m\wedge(m-3))/2+\ell_m/2}\langle \bar{\bY}, \bV\rangle
\cr &\le&\left(3\sum_{m=4}^{m_{1,2}}2^{-\ell_m}\right)^{1/2}
\max_{4\le m\le m_{1,2}}\max_{0\le\ell\le m-3}
\max_{\bV\in \scrB_{1,2}(m,\ell)}2^{\ell/2+\ell_m/2}\langle \bar{\bY}, \bV\rangle
\ees
for any nonnegative integers $\ell_m$. Here $a\wedge b=\min\{a,b\}$. It follows that if
$$\left(3\sum_{m=4}^{m_{1,2}}2^{-\ell_m}\right)^{1/2}\le 4,$$
then
\bel{pf-1}
\langle \bar{\bY}, \bU\rangle 
&\le& \max_{\bU\in\scrU_{1,2}^\ast(\bdelta)}
\langle \bar{\bY}, (\calP_{C_{1,2}}\bU_{1,2})\otimes \bU_{3,...,k}\rangle 
\cr && + 4\max_{4\le m\le m_{1,2}}\max_{0\le\ell\le m-3}
\max_{\bV\in \scrB_{1,2}(m,\ell)}2^{\ell/2+\ell_m/2}\langle \bar{\bY}, \bV\rangle. 
\eel
We note that $\calP_{C_{1,2}}=\calI$ when $m_{1,2}\le 3$. 

We have $|\scrU_{1,2}^\ast(\bdelta)|\le e^{4kd}$. 
To bound the cardinality of $\scrB_{1,2}(m,\ell)$, we pick 
\bes
m_{1,2} = \max\left\{\lfloor \log_2(4d/(\nu_*\log d_*))\rfloor, 0 \right\}, 
\ees
so that
$$\nu_* 2^{m_{1,2}}\log d_* \le 4d \le \nu_* 2^{m_{1,2}+1}\log d_*$$
if $\nu_*\log d_*\le 4d$ and $m_{1,2}=0$ otherwise. 
Moreover, for $4\le m\le m_{1,2}$, we pick integers $\ell_m$ satisfying 
\bes
\max\left\{2^{m-m_{1,2}},\frac{9}{8k\log d_*}\right\} \le 2^{-\ell_m} < \max\left\{2^{m-m_{1,2}},\frac{9}{4k\log d_*}\right\}. 
\ees
As
$$m_{1,2}\le \log_2d\le k\log(d_*)/\log 2,$$
we have 
\bes
\left(3\sum_{m=4}^{m_{1,2}}2^{-\ell_m}\right)^{1/2} 
\le \left(\frac{27(1+m_{1,2}-3)}{4k\log d_*}\right)^{1/2} 
\le \left(\frac{27}{4\log 2}\right)^{1/2} \le 3.121. 
\ees
We note that $\bU_{1,2}$ takes value $\pm 2^{-m/2}$ on $A_m$ 
and $\bU_{3,\ldots,k}$ takes value in $\pm 2^{j/2}/(\prod_{j=3}^k \sqrt{2d_j})$ for $j=0,\ldots,m_3+\ldots+m_k$.  
Let $m_{**}=k\log_2(\delta_*^2d_*)$. 
As $m_j=\lceil\log_2(\delta_j^2d_j)-1\rceil$ for $j>2$, each element of $\bU_{3,\ldots,k}$ 
has at most $2m_{**}+2$ possible values. It follows that 
\bes
\log |\scrB_{1,2}(m,\ell)| 
&\le& \log\left(\sum_{j=1}^{2^{m-\ell}}{d_1d_2\choose j}{d_3\cdots d_k\choose \lfloor\nu_* j\rfloor}
2^j(2m_{**}+2)^{\lfloor\nu_* j\rfloor}\right)
\cr &\le& \nu_* 2^{m-\ell}\left\{\log\left(\frac{ed_3\ldots d_k}{\nu_* 2^{m-\ell}}\right) + \log(2m_{**}+2) \right\}
\cr && + 2^{m-\ell}\left\{\log\left(\frac{ed_1d_2}{2^{m-\ell}}\right) + \log 2 \right\} + \log 2. 
\ees
As $x\log(y/x^2)$ is increasing in $x$ for $0< x \le \sqrt{y}/e$ and $4\le m\le m_{1,2}-\ell_m$, 
\bes
&& 2^{-(m-\ell)/2}\log |\scrB_{1,2}(m,\ell)| 
\cr &\le& \nu_* 2^{(m_{1,2}-\ell_m)/2}\left\{\log\left(\frac{ed_3\ldots d_k}{\nu_* 2^{m_{1,2}-\ell_m}}\right) + \log(2m_{**}+2) \right\}
\cr && + 2^{(m_{1,2}-\ell_m)/2}\left\{\log\left(\frac{ed_1d_2}{2^{m_{1,2}-\ell_m}}\right) + 2\log 2\right\}
\cr &\le& \nu_* 2^{(m_{1,2}-\ell_m)/2}\left\{\log\left(\frac{e(d_1d_2)^{1/\nu_*}d_3\ldots d_k}{\nu_* 2^{m_{1,2}-\ell_m}}\right) + \log(2m_{**}+2) \right\}
\cr &\le& \nu_* 2^{-\ell_m/2}\left({4d\over \nu_*\log d_*}\right)^{1/2}
\log\left(\frac{d_*^k e (d_1d_2)^{1/\nu_*}2^{\ell_m}(2m_{**}+2)}{d_1d_24d/\log d_*}\right).
%\cr&\le & 2^{-\ell_m/2}k\sqrt{4\nu_*d\log d_*}. 
\ees
%The last inequality above is a consequence of 
Note that
\bes
&& e (d_1d_2)^{1/\nu_*} 2^{\ell_m}(2m_{**}+2)\log d_* 
\cr &\le& (d_1d_2)^{1/\{(1+\alpha)\log d\}}(8e/9)k(\log d_*)^2\{2k\log_2(\delta_*^2d_*)+2\}
\cr &\le& 4d_1d_2d,
\ees  
where the last inequality follows from the fact that $d_\ast<d$ and the assumption that $d$ is sufficiently large. Thus,
$$
2^{-(m-\ell)/2}\log |\scrB_{1,2}(m,\ell)|\le 2^{-\ell_m/2}k\sqrt{4\nu_*d\log d_*}. 
$$
It follows that 
\bes
\log |\scrB_{1,2}(m,\ell)| \le 2^{(m-\ell-\ell_m)/2} k\sqrt{4\nu_*d\log d_*} \le 4kd ,\qquad \forall\ 0\le\ell\le m\le m_{1,2}. 
\ees

For any fixed $\bV\in \scrB_{1,2}(m,\ell)$, write
$Z_i=\left\langle \bY_i,\bV\right\rangle$. Then
$$
\left\langle \bar{\bY}, \bV\right\rangle={1\over n}(Z_1+\cdots+Z_n).
%\left\langle \bar{\bY}, \calP_{A}(\bv_1\otimes \cdots\otimes \bv_k)\right\rangle={1\over n}(Z_1+\cdots+Z_n).
$$
We have
$$\|\bV\|_{\max}\le 2^{-m/2}\delta_{3,...,k}\qquad {\rm and} \qquad \|\bV\|_{\rm HS}^2\le 2^{-\ell}.$$ 
Thus, as $\bY_i=\epsilon_i\bX_i$ and $\bX_i=(d_1\cdots d_k)\calP_{\omega_i}\bA$, 
we have
$$|Z_i|\le d_\ast^k\|\bA\|_{\max}\|\bV\|_{\max}\le 2^{-m/2}\delta_{3,...,k}d_\ast^k\|\bA\|_{\max}$$
and
$$
{\rm var}(Z_i)\le \E(Z_i^2)\le d_\ast^k\|\bA\|_{\max}^2\|\bV\|_{\rm HS}^2 \le 2^{-\ell}d_\ast^k\|\bA\|_{\max}^2.
$$
It follows from the Bernstein inequality and the union bound that 
\bes
&&\P\left\{\max_{\substack{\bV\in \scrB_{1,2}(m,\ell)}}\left\langle \bar{\bY},\bV\right\rangle
\ge 2^{-(\ell+\ell_m)/2}t\right\}\\
&\le& |\scrB_{1,2}(m,\ell)|\exp\left(-{n2^{-\ell-\ell_m}t^2\over 2^{1-\ell}d_\ast^k\|\bA\|_{\max}^2+(2/3)2^{-m/2}\delta_{3,...,k}d_\ast^k\|\bA\|_{\max}2^{-(\ell+\ell_m)/2}t}\right)\\
&\le& \exp\left(4kd -{n2^{-\ell_m}t^2\over 4d_\ast^k\|\bA\|_{\max}^2}\right)
+ \exp\left(2^{(m-\ell-\ell_m)/2} k\sqrt{4\nu_*d\log d_*} 
-{(3/4)2^{(m-\ell-\ell_m)/2}nt \over \delta_{3,...,k}d_\ast^k\|\bA\|_{\max}}\right).%\\
%&\le& \exp\left(-4kd\right)
%+ \exp\left(-\sqrt{4k\nu_*d}\right),
\ees
The condition on $t$ implies that
$$t \ge {8\over 3n}(\delta_{3,...,k}d_\ast^k\|\bA\|_{\max}) k\sqrt{4\nu_*d\log d_*}.$$
Together with the fact that $2^{-\ell_m}\ge (9/8)/(k\log d_*)$, we get
\bes
\frac{n 2^{-\ell_m} t^2}{4d_\ast^k\|\bA\|_{\max}^2} 
\ge \frac{2\delta_{3,...,k}^2d_\ast^kk^2(4\nu_*d\log d_*)}{n k \log d_*} 
\ge (d_1d_2\nu_*/n) 8kd \ge 8kd. 
\ees
Therefore,
$$
\exp\left(4kd -{n2^{-\ell_m}t^2\over 4d_\ast^k\|\bA\|_{\max}^2}\right)\le \exp\left( -{n2^{-\ell_m}t^2\over 8d_\ast^k\|\bA\|_{\max}^2}\right)\le \exp\left( -{9nt^2\over 64kd_\ast^k\|\bA\|_{\max}^2\log d_\ast}\right).
$$
Similarly, we have
$$
{(3/4)2^{(m-\ell-\ell_m)/2}nt \over \delta_{3,...,k}d_\ast^k\|\bA\|_{\max}} \ge 2 \cdot 2^{(m-\ell-\ell_m)/2}k\sqrt{4\nu_*d\log d_*},
$$
which implies that
\bes
&&\exp\left(2^{(m-\ell-\ell_m)/2} k\sqrt{4\nu_*d\log d_*} 
-{(3/4)2^{(m-\ell-\ell_m)/2}nt \over \delta_{3,...,k}d_\ast^k\|\bA\|_{\max}}\right)\\
&\le&\exp\left(-{3\over 8}\cdot{2^{(m-\ell-\ell_m)/2}nt \over \delta_{3,...,k}d_\ast^k\|\bA\|_{\max}}\right)\\
&\le&\exp\left(-{9\over 32}\cdot{nt \over (k\log d_\ast)^{1/2}\delta_{3,...,k}d_\ast^k\|\bA\|_{\max}}\right).
\ees

For $m_{1,2}\ge 1$, we have
$$2^{-(m_{1,2}+1)/2}\le \sqrt{(\nu_*\log d_*)/(4d)},$$
so that 
\bes
{3\over 4}nt 2^{(m_{1,2}+1)/2}/(\delta_{3,...,k}d_\ast^k\|\bA\|_{\max}) 
\ge 2 k\sqrt{4\nu_*d\log d_*}\sqrt{4d/(\nu_*\log d_*)}=8kd. 
\ees
As
$$|\langle \eps_i\bX_i, (\calP_{C_{1,2}}\bU_{1,2})\otimes \bU_{3,...,k}\rangle| 
\le 2^{-(m_{1,2}+1)/2}\delta_{3,\ldots,k}d_*^k\|\bA\|_{\max},$$
we have  
\bes
&& \P\left\{\max_{\bU\in\scrU_{1,2}^\ast(\bdelta)}
\langle \bar{\bY}, (\calP_{C_{1,2}}\bU_{1,2})\otimes \bU_{3,...,k}\rangle \ge t \right\}
\cr &\le & |\scrU^\ast_{1,2}(\bdelta)|\max_{\bU\in\scrU_{1,2}^\ast(\bdelta)}
\P\left\{\langle \bar{\bY}, (\calP_{C_{1,2}}\bU_{1,2})\otimes \bU_{3,...,k}\rangle \ge t \right\}
\cr &\le & \exp\left(4kd -{nt^2\over 2d_\ast^k\|\bA\|_{\max}^2
+2^{1-(m_{1,2}+1)/2}\delta_{3,\ldots,k}d_*^k\|\bA\|_{\max}t/3}\right)\\
&\le&\exp\left(-{nt^2\over 4d_\ast^k\|\bA\|_{\max}^2}\right)+\exp\left(-{3d^{1/2}nt\over 2\delta_{3,\ldots,k}d_*^k\|\bA\|_{\max}(\nu_*\log d_*)^{1/2}}\right).
%\cr &\le & \exp\left(4kd -{nt^2\over nt^2/(16kd)+nt^2/(16kd)}\right)
\ees
Finally, for $m_{1,2}=0$, we have $\nu_\ast> 4d/\log d_*$, so that 
the condition on $t$ still implies 
\bes
{(3/4)nt \over \delta_{3,...,k}d_\ast^k\|\bA\|_{\max}}
\ge 2k\sqrt{4\nu_*d\log d_*} \ge 8kd. 
\ees

Putting the above probability bounds together via (\ref{pf-1}), we find that 
\bes
&& \P\left\{\max_{\bU\in \scrU_{1,2}(\bdelta)}\langle \bar{\bY}, \bU\rangle 
\ge 2^{k+1}5 t \right\}
\cr &\le&\P\left\{\max_{\bU\in \scrU^\ast_{1,2}(\bdelta)}\langle \bar{\bY}, \bU\rangle \ge 5t \right\}
\cr &\le& \Big(1+2+\ldots+(m_{1,2}-2)\Big)\times\\
&&\times\left\{\exp\left( -{9nt^2\over 64kd_\ast^k\|\bA\|_{\max}^2\log d_\ast}\right) + \exp\left(-{9\over 32}\cdot{nt \over (k\log d_\ast)^{1/2}\delta_{3,...,k}d_\ast^k\|\bA\|_{\max}}\right)\right\}. 
\ees
As $m_{1,2}\le \log_2 d$, the proof is then completed in the light of (\ref{eq:chern}).
\end{proof}
\vskip 15pt

It is instructive to examine the case of hypercubic tensors where $d_1=\cdots=d_k=d$ and we take $\delta_1=\cdots=\delta_k=\delta_\ast$. The following is an immediate consequence of Theorem \ref{con-th-2}.
\begin{corollary}
Let $\bA\in \R^{d\ktimes d}$ be a $k$th order tensor, and $\delta_1=\cdots=\delta_k=\delta\in (0,1]$, then there exists constant $c_1,c_2>0$ depending on $k$ only such that, for any $\beta>0$,
\bel{eq:sample}
\left\|\bar{\bX}-\bA\right\|_{\circ,\bdelta}\le c_1(1+\beta)
\max\left\{\left({\log d\over n}\right)^{1/2}\delta^{k-2}d^{k-1/2}, 
\left({\log d\over n}\right)\delta^{k-2}d^{k+1/2}\right\}\|\bA\|_{\max},
\eel
with probability at least $1-c_2d^{-\beta}$. 
\end{corollary}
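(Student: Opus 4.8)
The plan is to deduce this corollary directly from Theorem \ref{con-th-2} by specializing $d_1=\cdots=d_k=d$ and $\delta_1=\cdots=\delta_k=\delta$, under which the auxiliary quantities of that theorem collapse to $d_\ast=d$, $\delta_\ast=\delta$, $\delta_{\ast\ast}=\delta$, and $d_{j_1}=d_{j_2}=d$, $\delta_{j_1}=\delta_{j_2}=\delta$ for every pair $j_1<j_2$. Throughout one works in the range $\delta\ge 1/\sqrt d$ in which Theorem \ref{con-th-2} is valid (this is inherited from Lemma \ref{lm-A}); no new idea is needed beyond choosing the free parameter $\alpha$ of Theorem \ref{con-th-2} to be a fixed multiple of $1+\beta$, and the rest is bookkeeping of constants.

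First I would simplify the threshold on $t$. The inner maximum in Theorem \ref{con-th-2} becomes $\{n/(\delta^4 d^2)+(\log d)/\delta^4\}^{1/2}=\delta^{-2}\{n/d^2+\log d\}^{1/2}$, so the required lower bound reads $t\ge 160(3\alpha+7)\,(2^k k/n)\sqrt{d\log d}\,\delta^{k-2}d^k\|\bA\|_{\max}\{n/d^2+\log d\}^{1/2}$. Using $\{n/d^2+\log d\}^{1/2}\le \sqrt n/d+\sqrt{\log d}$ and multiplying out, $(d^k/n)\sqrt{d\log d}\{n/d^2+\log d\}^{1/2}\le d^{k-1/2}\sqrt{\log d/n}+d^{k+1/2}\log d/n$, so the threshold is at most $C_k(1+\alpha)\|\bA\|_{\max}\,\delta^{k-2}\max\{d^{k-1/2}\sqrt{\log d/n},\,d^{k+1/2}\log d/n\}$ for a constant $C_k$ depending only on $k$. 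I would then take $\alpha=\beta$ and pick $c_1=c_1(k)$ large enough that the claimed right-hand side $t:=c_1(1+\beta)\max\{\cdots\}\|\bA\|_{\max}$ exceeds this threshold, which makes Theorem \ref{con-th-2} applicable at this $t$.

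Next I would estimate the three terms on the right-hand side of Theorem \ref{con-th-2} at this $t$ with $\alpha=\beta$. The term $\tfrac12 k^2 d^{-\alpha}$ equals $\tfrac12 k^2 d^{-\beta}$. For the two exponentials I substitute $d_\ast=d$, $\log d_\ast=\log d$, $\delta_\ast^k\delta_{\ast\ast}^{-2}=\delta^{k-2}$ and use the two one-sided consequences of the choice of $t$, namely $t\ge c_1(1+\beta)\sqrt{\log d/n}\,\delta^{k-2}d^{k-1/2}\|\bA\|_{\max}$ for the quadratic term and $t\ge c_1(1+\beta)(\log d/n)\delta^{k-2}d^{k+1/2}\|\bA\|_{\max}$ for the linear term; the exponents then become at least $c_k'(1+\beta)^2\delta^{2k-4}d^{k-1}$ and $c_k'(1+\beta)\sqrt d$ respectively. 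Since $\delta\ge 1/\sqrt d$ forces $\delta^{2k-4}d^{k-1}\ge d$ for $k\ge 3$, and $d$ is large by the hypothesis of Theorem \ref{con-th-2}, both exponents dominate $(1+\beta)\log d$ after possibly enlarging $c_1$; hence each exponential is at most $d^{-(1+\beta)}$, the prefactor $k^2(\log d)^2/\{4(\log 2)^2\}$ is absorbed, and the total is at most $c_2 d^{-\beta}$ with $c_2$ depending only on $k$.

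The main obstacle is purely arithmetic rather than conceptual: one must split $\{n/d^2+\log d\}^{1/2}$ so that the threshold lines up with the $\max\{\cdot,\cdot\}$ in the corollary rather than with a sum, and one must check that the constraint $\delta\ge 1/\sqrt d$ together with the large-$d$ assumption keeps the first exponent $\delta^{2k-4}d^{k-1}$ large enough that the two exponential terms, despite their $(\log d)^2$ prefactors, decay faster than $d^{-\beta}$.
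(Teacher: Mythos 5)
Your proposal is correct and matches the paper's intent: the paper states the corollary as an immediate specialization of Theorem \ref{con-th-2} to the hypercubic case with equal $\delta_j$'s, and your bookkeeping — simplifying the threshold via $\{n/d^2+\log d\}^{1/2}\le\sqrt{n}/d+\sqrt{\log d}$, taking $\alpha=\beta$, and checking that $\delta\ge 1/\sqrt d$ together with the large-$d$ hypothesis makes both exponential terms $O(d^{-\beta})$ despite the $(\log d)^2$ prefactor — is exactly the calculation the paper leaves to the reader.
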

\medskip

Note that the second term on the right hand side of (\ref{eq:sample}) decreases with $\delta$, indicating a tighter concentration bound for $\bar{\bX}-\bA$ when it dominates the first term. The bound (\ref{eq:sample}) immediately suggests an effective sampling scheme to approximate incoherent tensors in terms of the usual spectral norm. Suppose that $\bA$ is $\mu$-incoherent so that
$$
\max_{\|\bu\|_{\ell_2}\le 1}\|\bP_j(\bA)\bu\|_{\ell_\infty}\le \sqrt{\mu r_j(\bA)/d},\qquad j=1,\ldots, k.
$$
Then we can take $\delta=2\sqrt{\mu r/d}$ where $r=\max_j r_j(\bA)$. Equation (\ref{eq:sample}) now becomes
$$
\left\|\bar{\bX}-\bA\right\|_{\circ,\bdelta}\lesssim 
(\mu r)^{k/2-1}\max\left\{\left({\log d\over n}\right)^{1/2}d^{(k+1)/2}, 
\left({\log d\over n}\right)d^{(k+3)/2}\right\}\|\bA\|_{\max}.
$$
Let $\widehat{\bA}$ be the projection of $\bar{X}$ onto the space $\calT_\mu$ of $\mu$-incoherent tensors:
$$
\widehat{\bA}=\argmin_{\bY\in \calT_\mu} \|\bar{\bX}-\bY\|_{\circ,\bdelta}.
$$
By triangular inequality, $\|\widehat{\bA}-\bA\|_{\circ,\bdelta} \le 2\left\|\bar{\bX}-\bA\right\|_{\circ,\bdelta}$, so that  
$$ 
\|\widehat{\bA}-\bA\|_{\circ,\bdelta} \lesssim
(\mu r)^{k/2-1}\max\left\{\left({\log d\over n}\right)^{1/2}d^{(k+1)/2}, 
\left({\log d\over n}\right)d^{(k+3)/2}\right\}\|\bA\|_{\max}.
$$
Because both $\widehat{\bA}$ and $\bA$ are $\mu$-coherent. Their difference $\widehat{\bA}-\bA$ must be $\sqrt{2}\mu$-coherent. In the light of Proposition \ref{tensor-prop-2}, we know $\|\widehat{\bA}-\bA\|=\|\widehat{\bA}-\bA\|_{\circ,\bdelta}$, so that 
\bel{eq:appr}
\|\widehat{\bA}-\bA\| \lesssim
(\mu r)^{k/2-1}\max\left\{\left({\log d\over n}\right)^{1/2}d^{(k+1)/2}, 
\left({\log d\over n}\right)d^{(k+3)/2}\right\}\|\bA\|_{\max}.
\eel
In other words, we can approximate $\bA$ up to the same error bound given by (\ref{eq:sample}), but in terms of the usual spectral norm.

For illustration purposes, consider a more specific case when $\bA$ admits an orthogonal decomposition
$$
\bA=\sum_{i=1}^r \bu_1^{(i)}\kotimes \bu_k^{(i)},
$$
for some $\bu_j^{(i)}\in \R^{d}$ such that
$$
\langle\bu_j^{(i_1)},\bu_j^{(i_2)}\rangle =\left\{\begin{array}{ll}1& {\rm if\ } i_1=i_2\\ 0& {\rm otherwise}\end{array}\right..
$$
If $\bA$ is $\mu$-incoherent in that
$$
\|\bu_j^{(i)}\|_{\ell_\infty}\le \sqrt{\mu\over d},\qquad j=1,\ldots, k, i=1,\ldots, r.
$$
then
$$\|\bA\|_{\max}\le \mu^{k/2}rd^{-k/2}.$$
The approximation error bound given by (\ref{eq:appr}) can now be further simplified as
$$
\|\widehat{\bA}-\bA\|\lesssim 
\mu^{k-1}r^{k/2}\max\left\{\left({d\log d\over n}\right)^{1/2},{d^{3/2}\log d\over n}\right\}. 
$$
In other words, when $\mu^{k-1}=O(1)$, we can approximate $\bA$ up to an error of $\epsilon$, in terms of the usual spectral norm, based on observations from
$$
n \ge C_k\max\left({r^kd\log d\over \epsilon^2},{r^{k/2}d^{3/2}\log d\over \epsilon}\right)
$$
entries for some constant $C_k$. If the condition on $\bA$ is strengthened to $\|\bA\|_{\max}\lesssim \mu^{k/2}r^{1/2}d^{-k/2}$, then the sample size requirement becomes 
$$
n \ge C_k\max\left({r^{k-1}d\log d\over \epsilon^2},{r^{(k-1)/2}d^{3/2}\log d\over \epsilon}\right). 
$$
This example shows the importance of leveraging the information that a tensor is incoherent.

\section{Tensor Completion}
\label{sec:main}

We now turn our attention back to tensor completion through incoherent nuclear norm minimization:
\bel{eq:defhbT}
\min_{\bX}\|\bX\|_{\star,\bdelta}\ \hbox{ subject to }\ \calP_\Omega \bX = \calP_\Omega \bT. 
\eel
Denote by $\hbT$ the solution to the above convex optimization problem. We shall utilize the results from the previous sections to establish the requirement on the sample size $n:=|\Omega|$ so that $\hbT=\bT$ with high probability when $\Omega$ is a uniformly sampled subset of $[d_1]\ktimes [d_k]$.

Recall that $r_j(\bT)$s are the Tucker ranks of $\bT$. For brevity, we shall omit the dependence of $r_j$s on $\bT$ for the rest of the section. Denote by
$$r_\ast = \left[{1\over kd}\sum_{j=1}^k \left({d_j\over r_j}\prod_{\ell=1}^kr_\ell\right)\right]^{1/(k-1)},$$
\bel{coherence}
\mu_* = \frac{d_*^k}{kr_\ast^{k-1}d} \max_{i_1,\ldots,i_k}\|\calQ_{\bT}(e_{i_1}\kotimes e_{i_k})\|_{\rm HS}^2,
\eel
and
\bel{spike}
\alpha_\ast = (d_\ast^k/r_\ast)^{1/2}\|\bW_0\|_{\max},
\eel
where as before, $d$ and $d_\ast$ are the arithmetic and geometric averages of $d_j$s, and $\bW_0 \in\R^{d_1\ktimes d_k}$ is the dual of $\bT$ as specified in Theorem \ref{tensor-prop-3}. We are now in position to state our main result.

\begin{theorem}\label{th:main}
Let $\Omega$ be a uniformly sampled subset of $[d_1] \ktimes [d_k]$ and $\hbT$ be the solution to (\ref{eq:defhbT}) with $\delta_j=\sqrt{\lambda_\ast r_\ast/d_j}$. There exists a constant $c_k>0$ depending on $k$ only so that $\P\{\hbT=\bT\}\ge 1-d^{-\beta}$ if
$$
\lambda_\ast\ge {1\over r_\ast}\max_{1\le j\le k} \{\mu_j(\bT)r_j(\bT)\},
$$
and
$$
n:=|\Omega|\ge c_k(1+\beta)\left((\mu_*+\alpha_*^2\lam_*^{k-2})r_*^{k-1} d(\log d)^2 + \alpha_*\lam_*^{k/2-1}r_*^{(k-1)/2}d^{3/2}(\log d)^2 \right)
$$
\end{theorem}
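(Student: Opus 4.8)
The plan is to prove exact recovery by the standard dual-certificate route, carried out with the incoherent spectral norm $\|\cdot\|_{\circ,\bdelta}$ in place of the ordinary one and with the dual certificate built by a golfing scheme driven by Theorem \ref{con-th-2}.

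\emph{Reduction.} By Theorem \ref{tensor-prop-3} it suffices to produce, with the stated probability, (a) a ``restricted isometry'' bound $\|\calQ_{\bT}-(d_1\cdots d_k/n)\calQ_{\bT}\calP_\Omega\calQ_{\bT}\|_{\rm op}\le 1/2$ on the range of $\calQ_{\bT}$, and (b) a tensor $\bW\in{\rm range}(\calP_\Omega)$ with $\|\calQ_{\bT}^\perp\bW\|_{\circ,\bdelta}$ strictly below $2/\{k(k-1)\}$ with a constant margin and $\|\bW_0-\calQ_{\bT}\bW\|_{\rm HS}$ negligible, $\bW_0$ being the dual tensor of Theorem \ref{tensor-prop-3}. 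Indeed, for a feasible perturbation $\bH$ (so $\calP_\Omega\bH=0$, hence $\langle\bW,\bH\rangle=0$), Theorem \ref{tensor-prop-3} gives $\|\bT+\bH\|_{\star,\bdelta}\ge\|\bT\|_{\star,\bdelta}+\tfrac{2}{k(k-1)}\|\calQ_{\bT}^\perp\bH\|_{\star,\bdelta}+\langle\bW_0,\bH\rangle$; writing $\langle\bW_0,\bH\rangle=\langle\bW_0-\calQ_{\bT}\bW,\bH\rangle-\langle\calQ_{\bT}^\perp\bW,\calQ_{\bT}^\perp\bH\rangle$ and bounding $\|\bH\|_{\rm HS}$ by a fixed power of $d$ times $\|\calQ_{\bT}^\perp\bH\|_{\star,\bdelta}$ (via (a)) shows both cross terms are strictly dominated by $\tfrac{2}{k(k-1)}\|\calQ_{\bT}^\perp\bH\|_{\star,\bdelta}$ unless $\calQ_{\bT}^\perp\bH=0$, and then (a) forces $\bH=0$. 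Before this I would record two elementary facts in the style of Section \ref{sec:alg}, valid when $\delta_j\ge\max_{\|\bu\|_{\ell_2}\le1}\|\bP_j\bu\|_{\ell_\infty}$: (i) $\|\calQ_{\bT}\bZ\|_{\max}\le(k+1)\|\bZ\|_{\circ,\bdelta}$, because each rank-one summand of $\calQ_{\bT}(\bfe_{i_1}\kotimes\bfe_{i_k})$ lies in $\scrU(\bdelta)$; and (ii) $\langle\calQ_{\bT}^\perp\bu,\bZ\rangle\le\tfrac{k(k-1)}{2}\|\bZ\|_{\circ,\bdelta}$ for every $\bu\in\scrU(\bdelta)$, proved exactly as the analogous step in Theorem \ref{tensor-prop-3}.

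\emph{Concentration inputs.} Take $\delta_j=\sqrt{\lambda_\ast r_\ast/d_j}$ and $\mu_\ast$ as in (\ref{coherence}). Two estimates come from the non-commutative and scalar Bernstein inequalities together with a union bound over the $d_1\cdots d_k$ coordinates: the RIP bound (a), and an entrywise contraction $\|(\calQ_{\bT}-(d_1\cdots d_k/n)\calQ_{\bT}\calP_\Omega\calQ_{\bT})\bZ\|_{\max}\le\tfrac12\|\bZ\|_{\max}$ for $\bZ$ in the range of $\calQ_{\bT}$; in both the per-sample range and variance are controlled by $\mu_\ast r_\ast^{k-1}d/(d_1\cdots d_k)$, so each holds once $n\gtrsim(1+\beta)\mu_\ast r_\ast^{k-1}d\log d$. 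The decisive input is Theorem \ref{con-th-2} (equivalently its Corollary) applied with $\bA$ equal to a golfing residual: $\|(d_1\cdots d_k/n_\ell)\calP_{\Omega_\ell}\bR_{\ell-1}-\bR_{\ell-1}\|_{\circ,\bdelta}\lesssim_k(1+\beta)\max\{(\log d/n_\ell)^{1/2}\delta^{k-2}d^{k-1/2},(\log d/n_\ell)\delta^{k-2}d^{k+1/2}\}\,\|\bR_{\ell-1}\|_{\max}$ when $d_j\asymp d$; this is where the $d^{3/2}$ rate originates and where $\alpha_\ast$ enters, through $\|\bW_0\|_{\max}=\alpha_\ast\sqrt{r_\ast}/d_\ast^{k/2}$.

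\emph{Golfing and bookkeeping.} Split $\Omega$ into $L\asymp\log d$ independent batches $\Omega_1,\dots,\Omega_L$ with $|\Omega_\ell|=n_\ell\asymp n/\log d$; set $\bR_0=\bW_0\in{\rm range}(\calQ_{\bT}^0)\subseteq{\rm range}(\calQ_{\bT})$, $\bar{\bX}_\ell=(d_1\cdots d_k/n_\ell)\calP_{\Omega_\ell}\bR_{\ell-1}$, $\bR_\ell=\calQ_{\bT}(\bR_{\ell-1}-\bar{\bX}_\ell)=(\calQ_{\bT}-(d_1\cdots d_k/n_\ell)\calQ_{\bT}\calP_{\Omega_\ell}\calQ_{\bT})\bR_{\ell-1}$, and $\bW=\sum_{\ell=1}^L\bar{\bX}_\ell\in{\rm range}(\calP_\Omega)$. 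The two Bernstein estimates give $\|\bR_\ell\|_{\rm HS}\le\tfrac12\|\bR_{\ell-1}\|_{\rm HS}$ and $\|\bR_\ell\|_{\max}\le\tfrac12\|\bR_{\ell-1}\|_{\max}$, so $\|\bW_0-\calQ_{\bT}\bW\|_{\rm HS}=\|\bR_L\|_{\rm HS}\le 2^{-L}\|\bW_0\|_{\rm HS}$ is negligible (Proposition \ref{tensor-prop-2} bounds $\|\bW_0\|_{\rm HS}$ by a power of $d$), while $\|\bR_{\ell-1}\|_{\max}\le 2^{-(\ell-1)}\|\bW_0\|_{\max}$. Since $\calQ_{\bT}^\perp\bR_{\ell-1}=0$ we have $\calQ_{\bT}^\perp\bW=\sum_\ell\calQ_{\bT}^\perp(\bar{\bX}_\ell-\bR_{\ell-1})$, hence by fact (ii) and Theorem \ref{con-th-2},
\[
\|\calQ_{\bT}^\perp\bW\|_{\circ,\bdelta}\lesssim_k(1+\beta)\sum_{\ell=1}^L 2^{-(\ell-1)}\frac{\alpha_\ast\sqrt{r_\ast}}{d_\ast^{k/2}}\max\Big\{\Big(\tfrac{\log d}{n_\ell}\Big)^{1/2}\delta^{k-2}d^{k-1/2},\ \tfrac{\log d}{n_\ell}\delta^{k-2}d^{k+1/2}\Big\}.
\]
Summing the geometric series, inserting $\delta^2\asymp\lambda_\ast r_\ast/d$ and $n_\ell\asymp n/\log d$, and demanding the right-hand side be below the threshold of the Reduction step yields exactly $n\gtrsim(1+\beta)\alpha_\ast^2\lambda_\ast^{k-2}r_\ast^{k-1}d(\log d)^2$ and $n\gtrsim(1+\beta)\alpha_\ast\lambda_\ast^{k/2-1}r_\ast^{(k-1)/2}d^{3/2}(\log d)^2$; together with $n\gtrsim(1+\beta)\mu_\ast r_\ast^{k-1}d(\log d)^2$ from the Bernstein estimates (one $\log d$ for the union bound and one for $L$), this is the asserted sample size, the failure probability being $d^{-\beta}$ after allotting a share of it to each of the $O(L)$ events.

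\emph{Main difficulty.} The crux is the interlocking norm bookkeeping in the golfing recursion — the residual must be contracted simultaneously in $\|\cdot\|_{\rm HS}$ and in $\|\cdot\|_{\max}$, and only the latter, combined with the $\|\cdot\|_{\circ,\bdelta}$ concentration of Theorem \ref{con-th-2}, keeps the certificate estimate from blowing up. Within that, the entrywise contraction for the \emph{tensor} tangent projection $\calQ_{\bT}$ is the delicate point, since its range is a sum of $k$ overlapping subspaces rather than the single tangent space of the matrix case; checking that the coherence parameter $\mu_\ast$ of (\ref{coherence}) is precisely what governs the per-entry fluctuations of $\calQ_{\bT}\calP_\Omega\calQ_{\bT}$ (so that $O(\mu_\ast r_\ast^{k-1}d\,\plog d)$ samples suffice), and that facts (i)–(ii) hold with constants depending on $k$ only, are the steps most likely to demand care.
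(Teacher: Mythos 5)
Your proposal is correct and follows essentially the same route as the paper: reduction to a dual certificate via Theorem \ref{tensor-prop-3} (the paper packages this as Lemma \ref{prop-dual-cert}), restricted isometry on ${\rm range}(\calQ_{\bT})$ via matrix Bernstein, a golfing scheme that contracts the residual simultaneously in $\|\cdot\|_{\rm HS}$ and $\|\cdot\|_{\max}$ (paper's Lemma \ref{lm-tensor-ineq}), and Theorem \ref{con-th-2} applied to the golfing residuals to control the $\|\cdot\|_{\circ,\bdelta}$ norm of the off-tangent component. The only cosmetic differences are that you state facts (i)--(ii) as standalone observations whereas the paper absorbs fact (ii) into the proof of Theorem \ref{tensor-prop-3} and never needs fact (i), and in the reduction step you should bound $\|\calQ_{\bT}\bH\|_{\rm HS}$ rather than $\|\bH\|_{\rm HS}$ (since $\bW_0-\calQ_{\bT}\bW$ lies in ${\rm range}(\calQ_{\bT})$), though your conclusion is unaffected.
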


\vskip 15pt
\begin{proof}[Proof of Theorem \ref{th:main}]
The main steps of the proof is analogous to those from Yuan and Zhang (2014). We shall outline below these steps while highlighting the key differences moving from third order tensors to higher order tensors, and from usual tensor nuclear norm to incoherent tensor nuclear norm. We begin with a lemma that reduces the problem to finding a dual certificate.

\begin{lemma}\label{prop-dual-cert} 
Suppose there exists a tensor $\tbG\in\R^{d_1\ktimes d_k}$ such that $\tbG=\calP_\Omega\tbG$, 
\bel{dual-cert-a}
\|\calQ_{\bT}\tbG-\bW_0\|_{\rm HS} < \frac{\sqrt{n/(2d_*^k)}}{k(k-1)}
\eel
and
\bel{dual-cert-b}
\max_{\|\calQ_{\bT}^\perp\bX\|_{\star,\bdelta}=1}\langle\tbG,\calQ_{\bT}^\perp\bX\rangle <\frac{1}{k(k-1)}. 
\eel
If in addition,
\bel{eq:proj-on-T}
\left\|\calP_\Omega|_{{\rm range}(\calQ_{\bT})}\right\|_{\rm HS\to HS}:=\inf\left\{\|\calP_\Omega\calQ_{\bT}\bX\|_{\rm HS}: \|\calQ_{\bT}\bX\|_{\rm HS}=1\right\}\ge \sqrt{n\over 2d_*^k},
\eel
then $\hbT=\bT$.
\end{lemma}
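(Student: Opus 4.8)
The plan is to argue by contradiction. Let $\bX$ be any tensor feasible for (\ref{eq:defhbT}) with $\bX\neq\bT$, and set $\bDelta=\bX-\bT$, so that $\calP_\Omega\bDelta=0$ and $\bDelta\neq0$; I will show $\|\bX\|_{\star,\bdelta}>\|\bT\|_{\star,\bdelta}$, which forces $\bT$ to be the unique minimizer and hence $\hbT=\bT$. The entry point is the subdifferential inequality of Theorem \ref{tensor-prop-3} at $\bX=\bT$ tested against $\bT+\bDelta$; since $\calQ_{\bT}^\perp\bT=0$ it reads
$$\|\bT+\bDelta\|_{\star,\bdelta}\ \ge\ \|\bT\|_{\star,\bdelta}+\frac{2}{k(k-1)}\,\|\calQ_{\bT}^\perp\bDelta\|_{\star,\bdelta}+\langle\bW_0,\bDelta\rangle ,$$
so it suffices to prove $\frac{2}{k(k-1)}\|\calQ_{\bT}^\perp\bDelta\|_{\star,\bdelta}+\langle\bW_0,\bDelta\rangle>0$.

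Next I would use the certificate $\tbG$ to re-express $\langle\bW_0,\bDelta\rangle$. Since $\tbG=\calP_\Omega\tbG$ and $\calP_\Omega$ is self-adjoint, $\langle\tbG,\bDelta\rangle=\langle\tbG,\calP_\Omega\bDelta\rangle=0$. Also $\bW_0=\calQ_{\bT}^0\bW_0$, and the cross terms in $\calQ_{\bT}$ annihilate $\calQ_{\bT}^0\bW_0$ because $\bP_j^\perp\bP_j=0$, so $\calQ_{\bT}\bW_0=\bW_0$. Splitting $\bDelta=\calQ_{\bT}\bDelta+\calQ_{\bT}^\perp\bDelta$ and subtracting $\langle\tbG,\bDelta\rangle=0$ then gives
$$\langle\bW_0,\bDelta\rangle=\big\langle\bW_0-\calQ_{\bT}\tbG,\ \calQ_{\bT}\bDelta\big\rangle-\big\langle\tbG,\ \calQ_{\bT}^\perp\bDelta\big\rangle .$$
The last term is handled directly by (\ref{dual-cert-b}): by homogeneity $\langle\tbG,\calQ_{\bT}^\perp\bDelta\rangle\le\theta\,\|\calQ_{\bT}^\perp\bDelta\|_{\star,\bdelta}$ with $\theta:=\max_{\|\calQ_{\bT}^\perp\bX\|_{\star,\bdelta}=1}\langle\tbG,\calQ_{\bT}^\perp\bX\rangle<1/\{k(k-1)\}$.

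The crux—which I expect to be the main obstacle—is bounding $\langle\bW_0-\calQ_{\bT}\tbG,\calQ_{\bT}\bDelta\rangle\ge-\|\bW_0-\calQ_{\bT}\tbG\|_{\rm HS}\,\|\calQ_{\bT}\bDelta\|_{\rm HS}$, which requires trading $\|\calQ_{\bT}\bDelta\|_{\rm HS}$ against $\|\calQ_{\bT}^\perp\bDelta\|_{\star,\bdelta}$. Here I would use $\calP_\Omega\bDelta=0$ to write $\calP_\Omega\calQ_{\bT}\bDelta=-\calP_\Omega\calQ_{\bT}^\perp\bDelta$, invoke the restricted-injectivity bound (\ref{eq:proj-on-T}) to get $\|\calQ_{\bT}\bDelta\|_{\rm HS}\le\sqrt{2d_*^k/n}\,\|\calP_\Omega\calQ_{\bT}^\perp\bDelta\|_{\rm HS}$, and then observe that $\|\calP_\Omega\calQ_{\bT}^\perp\bDelta\|_{\rm HS}^2=\langle\calP_\Omega\calQ_{\bT}^\perp\bDelta,\calQ_{\bT}^\perp\bDelta\rangle\le\|\calP_\Omega\calQ_{\bT}^\perp\bDelta\|_{\circ,\bdelta}\|\calQ_{\bT}^\perp\bDelta\|_{\star,\bdelta}\le\|\calP_\Omega\calQ_{\bT}^\perp\bDelta\|_{\rm HS}\|\calQ_{\bT}^\perp\bDelta\|_{\star,\bdelta}$, using the dual pairing and Proposition \ref{tensor-prop-1} ($\|\cdot\|_{\circ,\bdelta}\le\|\cdot\|_{\rm HS}$); hence $\|\calP_\Omega\calQ_{\bT}^\perp\bDelta\|_{\rm HS}\le\|\calQ_{\bT}^\perp\bDelta\|_{\star,\bdelta}$. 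Chaining these yields $\|\calQ_{\bT}\bDelta\|_{\rm HS}\le\sqrt{2d_*^k/n}\,\|\calQ_{\bT}^\perp\bDelta\|_{\star,\bdelta}$, and combined with (\ref{dual-cert-a}) this gives $\langle\bW_0-\calQ_{\bT}\tbG,\calQ_{\bT}\bDelta\rangle>-\frac{1}{k(k-1)}\|\calQ_{\bT}^\perp\bDelta\|_{\star,\bdelta}$.

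Finally I would assemble the pieces: the quantity $\frac{2}{k(k-1)}\|\calQ_{\bT}^\perp\bDelta\|_{\star,\bdelta}$ is decreased by strictly less than $\frac{1}{k(k-1)}\|\calQ_{\bT}^\perp\bDelta\|_{\star,\bdelta}$ from the $-\langle\tbG,\calQ_{\bT}^\perp\bDelta\rangle$ term and by strictly less than $\frac{1}{k(k-1)}\|\calQ_{\bT}^\perp\bDelta\|_{\star,\bdelta}$ from the cross term, so the total is strictly positive \emph{provided} $\|\calQ_{\bT}^\perp\bDelta\|_{\star,\bdelta}>0$. That proviso is exactly where the degenerate case is dispatched: if $\calQ_{\bT}^\perp\bDelta=0$ then the displayed bound forces $\|\calQ_{\bT}\bDelta\|_{\rm HS}=0$ as well, hence $\bDelta=0$, contradicting $\bDelta\neq0$. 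Therefore $\|\bX\|_{\star,\bdelta}>\|\bT\|_{\star,\bdelta}$ for every feasible $\bX\neq\bT$, so $\bT$ is the unique solution of (\ref{eq:defhbT}) and the lemma follows.
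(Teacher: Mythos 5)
Your argument is correct and follows essentially the same route as the paper's: both start from the subdifferential inequality of Theorem \ref{tensor-prop-3}, use $\langle\tbG,\bDelta\rangle=0$ and $\calQ_{\bT}\bW_0=\bW_0$ to split $\langle\bW_0,\bDelta\rangle=\langle\bW_0-\calQ_{\bT}\tbG,\calQ_{\bT}\bDelta\rangle-\langle\tbG,\calQ_{\bT}^\perp\bDelta\rangle$, control the first term with (\ref{eq:proj-on-T}) and (\ref{dual-cert-a}) via $\|\calQ_{\bT}\bDelta\|_{\rm HS}\le\sqrt{2d_*^k/n}\,\|\calQ_{\bT}^\perp\bDelta\|_{\star,\bdelta}$, control the second with (\ref{dual-cert-b}), and conclude $\bDelta=0$. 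The only cosmetic difference is your intermediate step $\|\calP_\Omega\calQ_{\bT}^\perp\bDelta\|_{\rm HS}\le\|\calQ_{\bT}^\perp\bDelta\|_{\star,\bdelta}$, which you derive via the $\circ$--$\star$ duality pairing plus $\|\cdot\|_{\circ,\bdelta}\le\|\cdot\|_{\rm HS}$, whereas the paper gets it more directly from the contraction $\|\calP_\Omega\cdot\|_{\rm HS}\le\|\cdot\|_{\rm HS}$ followed by $\|\cdot\|_{\rm HS}\le\|\cdot\|_{\star,\bdelta}$ (both from Proposition \ref{tensor-prop-1}).
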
 

The proof of Lemma \ref{prop-dual-cert} is relegated to the proof. In the light of Lemma \ref{prop-dual-cert}, it now suffices to verify condition (\ref{eq:proj-on-T}) and construct a dual certificate $\tbG$ that satisfies conditions (\ref{dual-cert-a}) and (\ref{dual-cert-b}). We first verify condition (\ref{eq:proj-on-T}).

Recall that for a linear operator $\calR: \R^{d_1\ktimes d_k}\to \R^{d_1\ktimes d_k}$,
$$
\|\calR\|_{\rm HS\to HS} = \max\left\{\|\calR\bX\|_{\rm HS}: \bX\in\R^{d_1\ktimes d_k}, \|\bX\|_{\rm HS}\le 1\right\}.
$$
Here we prove that under the Hilbert-Schmidt norm in the range of $\calQ_{\bT}$, 
\bel{eq:cond1}
\left\|\calQ_{\bT}\Big((d_*^k/n)\calP_\Omega - \calI\Big)\calQ_{\bT}\right\|_{\rm HS\to HS} \le 1/2
\eel
with large probability. This implies that as an operator in the range of $\calQ_{\bT}$, the spectrum of $(d_*^k/n)\calQ_{\bT}\calP_\Omega\calQ_{\bT}$ is contained in $[1/2,3/2]$. Consequently, (\ref{eq:proj-on-T}) holds via
$$
(d_*^k/n)\|\calP_\Omega\calQ_{\bT}\bX\|_{\rm HS}^2 =  \left\langle \calQ_{\bT}\bX, (d_*^k/n)\calQ_{\bT}\calP_\Omega\calQ_{\bT}\bX\right\rangle 
\ge \frac{1}{2}\|\calQ_{\bT}\bX\|_{\rm HS}^2. 
$$
This goal can be achieved by invoking the following result.

\begin{lemma}
\label{lm:op-norm}
Let $\Omega$ be a uniformly sampled subset from $[d_1]\ktimes [d_k]$ without replacement. Then, 
$$
\P\left\{\left\|\calQ_{\bT}\left({d_*^k\over n}\calP_\Omega - \calI\right)\calQ_{\bT}\right\|_{\rm HS\to HS}\ge \tau\right\}\le 2kr_\ast^{k-1}d\,\exp\left(-\frac{\tau^2/2}{1+2\tau /3}\left(\frac{n}{k\mu_*r_\ast^{k-1}d}\right)\right). 
$$
\end{lemma}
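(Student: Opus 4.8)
The plan is to read Lemma~\ref{lm:op-norm} as a matrix (operator) Bernstein bound for a normalized sum of i.i.d.\ rank-one self-adjoint perturbations of $\calQ_{\bT}$, acting on the finite-dimensional subspace on which $\calQ_{\bT}$ is the identity. Writing $e_\omega = e_{i_1}\kotimes e_{i_k}$ for the canonical basis tensor indexed by $\omega=(i_1,\ldots,i_k)$ and $\calP_\omega\bX=\langle\bX,e_\omega\rangle e_\omega$, we have $d_*^k\calP_\Omega=\sum_{i=1}^n d_*^k\calP_{\omega_i}$ and $\E[d_*^k\calP_{\omega_i}]=\calI$ regardless of whether the $\omega_i$ are drawn with or without replacement. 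Since the tail bound for sampling without replacement is dominated by the one for i.i.d.\ uniform sampling (Tropp's matrix Bernstein has a without-replacement version, or one invokes the standard negative-association comparison), it suffices to treat $\omega_1,\ldots,\omega_n$ as i.i.d.\ uniform on $[d_1]\ktimes[d_k]$. Then $\calQ_{\bT}\big((d_*^k/n)\calP_\Omega-\calI\big)\calQ_{\bT}=\frac1n\sum_{i=1}^n\bS_i$ with $\bS_i=\bM_i-\calQ_{\bT}$, $\bM_i=d_*^k\,\calQ_{\bT}\calP_{\omega_i}\calQ_{\bT}$, the $\bS_i$ being i.i.d., self-adjoint, mean zero, and supported on $V:={\rm range}(\calQ_{\bT})$ (recall $\calQ_{\bT}$ is an orthogonal projection, being an orthogonal sum of tensor products of orthogonal projections).

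Next I would pin down the dimension of $V$, which is where the otherwise unmotivated definition of $r_\ast$ enters. From $\calQ_{\bT}=\bP_1\kotimes\bP_k+\sum_{j=1}^k\bP_1\kotimes\bP_{j-1}\otimes\bP_j^\perp\otimes\bP_{j+1}\kotimes\bP_k$ and additivity of ranks over orthogonal tensor-product projections,
$$\dim V=\prod_{\ell}r_\ell+\sum_{j=1}^k\Big(\prod_{\ell\ne j}r_\ell\Big)(d_j-r_j)=\Big(\prod_\ell r_\ell\Big)\Big(\sum_{j=1}^k\frac{d_j}{r_j}-(k-1)\Big)\le kd\,r_\ast^{k-1},$$
using $\sum_j (d_j/r_j)\prod_\ell r_\ell=kd\,r_\ast^{k-1}$ by definition of $r_\ast$. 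This produces the prefactor $2kr_\ast^{k-1}d$.

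Then I would extract the two parameters Bernstein needs from the rank-one structure of $\bM_i$. Since $\calQ_{\bT}$ is self-adjoint, $\bM_i\colon \bX\mapsto d_*^k\langle\bX,\calQ_{\bT}e_{\omega_i}\rangle\,\calQ_{\bT}e_{\omega_i}$ is positive semidefinite with $\|\bM_i\|_{\rm HS\to HS}=d_*^k\|\calQ_{\bT}e_{\omega_i}\|_{\rm HS}^2\le k\mu_*r_\ast^{k-1}d$ by the definition~(\ref{coherence}) of $\mu_*$; hence $\|\bS_i\|_{\rm HS\to HS}\le\|\bM_i\|_{\rm HS\to HS}+1\le 2k\mu_*r_\ast^{k-1}d=:R$. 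For the matrix variance, the same rank-one identity gives $\bM_i^2=d_*^k\|\calQ_{\bT}e_{\omega_i}\|_{\rm HS}^2\,\bM_i\preceq (k\mu_*r_\ast^{k-1}d)\,\bM_i$, so $\E[\bS_i^2]=\E[\bM_i^2]-\calQ_{\bT}\preceq (k\mu_*r_\ast^{k-1}d-1)\,\calQ_{\bT}$ and $\big\|\sum_{i=1}^n\E[\bS_i^2]\big\|_{\rm HS\to HS}\le nk\mu_*r_\ast^{k-1}d=:\sigma^2$.

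Finally, I would apply the matrix Bernstein inequality on $V$ to $\sum_i\bS_i$ with ambient dimension $\dim V\le kr_\ast^{k-1}d$, per-term bound $R$, and variance proxy $\sigma^2$, and evaluate at deviation $n\tau$:
$$\P\Big\{\big\|\tfrac1n\textstyle\sum_i\bS_i\big\|_{\rm HS\to HS}\ge\tau\Big\}\le 2kr_\ast^{k-1}d\,\exp\!\left(-\frac{n^2\tau^2/2}{\sigma^2+Rn\tau/3}\right)=2kr_\ast^{k-1}d\,\exp\!\left(-\frac{\tau^2/2}{1+2\tau/3}\cdot\frac{n}{k\mu_*r_\ast^{k-1}d}\right),$$
which is exactly the claimed bound. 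I expect essentially no conceptual obstacle here: the content is bookkeeping with rank-one operators plus the identifications $\dim{\rm range}(\calQ_{\bT})\lesssim kr_\ast^{k-1}d$ and $\max_\omega d_*^k\|\calQ_{\bT}e_\omega\|_{\rm HS}^2 = k\mu_*r_\ast^{k-1}d$; the only point requiring genuine (if standard) care is the passage from sampling without replacement to i.i.d.\ sampling, and, if one wants to avoid even that, one can instead run the whole argument under the Bernoulli($n/d_*^k$) model and transfer.
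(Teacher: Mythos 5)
Your proposal is correct and is essentially the same argument the paper has in mind: the paper simply defers to the matrix Bernstein route of Yuan and Zhang (2014), supplying only the observation that $\dim\hbox{\rm range}(\calQ_{\bT})\le\sum_j d_j\prod_{\ell\neq j}r_\ell$, and you have filled in exactly the bookkeeping that this pointer leaves implicit (the rank-one structure of $d_*^k\calQ_{\bT}\calP_{\omega}\calQ_{\bT}$, the bound $R$ and variance proxy $\sigma^2$ in terms of $\mu_*$, and the without-replacement-to-i.i.d.\ comparison). As a side note, your computation $\dim V\le kd\,r_\ast^{k-1}$ is the correct reading of the definition of $r_\ast$ and matches the prefactor $2kr_\ast^{k-1}d$ in the lemma; the paper's in-text ``$=r_\ast^{k-1}d$'' is off by a factor of $k$ and appears to be a typo.
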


Lemma \ref{lm:op-norm} can be proved using the same argument from Yuan and Zhang (2014) in treating low-rank tensors, noting that
$$
\rank(\calQ_{\bT}) = \dim\big(\hbox{\rm range}(\calQ_{\bT})\big) \le \sum_{j=1}^k d_j\prod_{\ell\neq j}r_\ell = r_\ast^{k-1}d.
$$
The details are omitted for brevity.

Equation (\ref{eq:cond1}) follows immediately from Lemma \ref{lm:op-norm} as soon as
$$
n\ge c_k(\beta+1)\mu_\ast r_\ast^{k-1}d\log(d).
$$

It now remains to show that there exists a dual certificate $\tbG$ that satisfies conditions (\ref{dual-cert-a}) and (\ref{dual-cert-b}). To this end, we apply the now standard ``Golfing scheme''. See, e.g., Gross (2011) and Recht (2011). As argued by Yuan and Zhang (2014), we can construct a sequence $\{\omega_i: 1\le i\le n\}$ of iid uniform vectors from $[d_1]\ktimes[d_k]$ such that $\omega_i\in \Omega$ for all $1\le i\le n$. Let $n_1$ and $n_2$ be two natural numbers to be specified later so that $n_1n_2\le n$. Write
$$
\Omega_j=\left\{\omega_i: (j-1)n_1<i\le jn_1\right\},
$$
for $j=1,2,\ldots, n_2$. Define 
\bel{R_j}
\calR_j = \calI - \frac{1}{n_1}\sum_{i=(j-1)n_1+1}^{jn_1}d_*^k\,\calP_{\omega_i}
\eel
and
\bel{dual-cert-def}
\tbG_j = \sum_{\ell=1}^j \big(\calI - \calR_\ell\big)\calQ_{\bT}\calR_{\ell-1}\calQ_{\bT}\cdots\calQ_{\bT}\calR_{1}\calQ_{\bT}\bW_0,\quad \tbG = \tbG_{n_2}.
\eel
Since $\omega_i\in\Omega$,
$$\calP_\Omega(\calI- \calR_j)=\calI - \calR_j,$$
so that $\calP_\Omega\tbG = \tbG$. It follows from the definition of $\tbG_j$ that 
\bes
\calQ_{\bT}\tbG_j &=& \sum_{\ell=1}^{j}(\calQ_{\bT}- \calQ_{\bT}\calR_\ell\calQ_{\bT})(\calQ_{\bT}\calR_{\ell-1}\calQ_{\bT})\cdots(\calQ_{\bT}\calR_{1}\calQ_{\bT}\bW_0)\\
&=& \bW_0 - (\calQ_{\bT}\calR_{j}\calQ_{\bT})\cdots(\calQ_{\bT}\calR_1\calQ_{\bT})\bW_0
\ees
and for any $\bX\in \R^{d_1\ktimes d_k}$,
$$
\langle\tbG_j,\calQ_{\bT}^\perp\bX\rangle = - \Big\langle \sum_{\ell=1}^j \calR_\ell(\calQ_{\bT}\calR_{\ell-1}\calQ_{\bT})\cdots(\calQ_{\bT}\calR_{1}\calQ_{\bT})\bW_0,\calQ_{\bT}^\perp\bX\Big\rangle. 
$$
Thus, conditions (\ref{dual-cert-a}) and (\ref{dual-cert-b}) hold if 
\bel{dual-cert-1}
\|(\calQ_{\bT}\calR_{n_2})\cdots(\calQ_{\bT}\calR_1)\bW_0\|_{\rm HS} < \frac{\sqrt{n/(2d_*^k)}}{k(k-1)}
\eel
and 
\bel{dual-cert-2}
\left\|\sum_{\ell=1}^{n_2} \calR_\ell(\calQ_{\bT}\calR_{\ell-1}\calQ_{\bT})\cdots(\calQ_{\bT}\calR_{1}\calQ_{\bT})\bW_0\right\|_{\circ,\bdelta} <\frac{1}{k(k-1)}. 
\eel

We still need to prove that (\ref{dual-cert-1}) and (\ref{dual-cert-2}) hold with high probability. For this purpose, we need large deviation bounds for the average of certain iid tensors under the operator, maximum and spectrum norms. The large deviation bounds for the operator and maximum norms are presented in the following lemma. 

\begin{lemma}\label{lm-tensor-ineq} 
Let $\omega_i$, $i=1,\ldots, n_1$ be iid uniformly sampled from $[d_1]\ktimes[d_k]$, and
$$\calD_i = \calQ_{\bT} (d_*^k\calP_{\omega_i})\calQ_{\bT} - \calQ_{\bT}.$$ 
Then, for all $\tau>0$, 
\bel{tensor-ineq-1} 
\P\left\{\left\|\frac{1}{n_1}\sum_{i=1}^{n_1}\calD_i\right\|_{\rm HS\to HS} > \tau \right\}\le 2(r_\ast^{k-1}d)\exp\left(-\frac{\tau^2/2}{1+2\tau /3}\left(\frac{n_1}{\mu_*r_\ast^{k-1}d}\right)\right). 
\eel
Moreover, for any deterministic $\bX\in\R^{d_1\ktimes d_k}$ with $\|\bX\|_{\max}\le 1$, 
\bel{tensor-ineq-2}
\P\left\{\left\|\frac{1}{n_1}\sum_{i=1}^{n_1}\calD_i\bX\right\|_{\max} \ge \tau\right\} \le 2d_*^k\,\exp\left(-\frac{\tau^2/2}{1+2\tau /3}\left(\frac{n_1}{\mu_*r_\ast^{k-1}d}\right)\right). 
\eel
\end{lemma}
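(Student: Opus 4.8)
The plan is to prove both inequalities by Bernstein-type concentration for the iid average $\frac{1}{n_1}\sum_{i=1}^{n_1}\calD_i$, leaning on two structural facts recorded at the outset. Since $\omega_i$ is uniform on the $d_1\cdots d_k=d_*^k$ cells, $\E[d_*^k\calP_{\omega_i}]=\calI$, hence $\E\calD_i=\calQ_{\bT}\calQ_{\bT}-\calQ_{\bT}=0$, and each $\calD_i$ is self-adjoint and supported on $\operatorname{range}(\calQ_{\bT})$. Writing $\bA_i=\calQ_{\bT}(d_*^k\calP_{\omega_i})\calQ_{\bT}$ and $e_\omega=e_{i_1}\kotimes e_{i_k}$ for $\omega=(i_1,\ldots,i_k)$, the rank-one identity $\calP_{\omega_i}\calQ_{\bT}\calP_{\omega_i}=\|\calQ_{\bT} e_{\omega_i}\|_{\rm HS}^2\,\calP_{\omega_i}$ (which follows from $(\calQ_{\bT})_{\omega,\omega}=\|\calQ_{\bT} e_\omega\|_{\rm HS}^2$) gives $\bA_i^2=d_*^k\|\calQ_{\bT} e_{\omega_i}\|_{\rm HS}^2\,\bA_i$; moreover the coherence parameter in (\ref{coherence}) is normalized precisely so that $d_*^k\max_\omega\|\calQ_{\bT} e_\omega\|_{\rm HS}^2=k\mu_* r_\ast^{k-1}d$, while $\rank(\calQ_{\bT})\le r_\ast^{k-1}d$, exactly as already used for Lemma \ref{lm:op-norm}. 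Every estimate below is read off from these facts.

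For (\ref{tensor-ineq-1}) I would apply the matrix Bernstein inequality to the self-adjoint iid summands $\calD_i$ on the finite-dimensional space $\operatorname{range}(\calQ_{\bT})$, exactly as in Lemma \ref{lm:op-norm}. From $\bA_i\succeq 0$ and $\bA_i^2=d_*^k\|\calQ_{\bT} e_{\omega_i}\|_{\rm HS}^2\bA_i$ one obtains the almost-sure bound $\|\calD_i\|_{\rm HS\to HS}\le\|\bA_i\|_{\rm HS\to HS}+1\le d_*^k\max_\omega\|\calQ_{\bT} e_\omega\|_{\rm HS}^2+1\lesssim\mu_* r_\ast^{k-1}d$, and, taking expectations, $\E[\calD_i^2]=\E[\bA_i^2]-\calQ_{\bT}\preceq\bigl(d_*^k\max_\omega\|\calQ_{\bT} e_\omega\|_{\rm HS}^2\bigr)\E[\bA_i]=\bigl(d_*^k\max_\omega\|\calQ_{\bT} e_\omega\|_{\rm HS}^2\bigr)\calQ_{\bT}$, so that the variance parameter of $\sum_i\calD_i$ is $\|\sum_i\E\calD_i^2\|_{\rm HS\to HS}\lesssim n_1\mu_* r_\ast^{k-1}d$. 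Feeding $R\lesssim\mu_* r_\ast^{k-1}d$ and $\sigma^2\lesssim n_1\mu_* r_\ast^{k-1}d$ into matrix Bernstein, with the dimension prefactor $2\rank(\calQ_{\bT})\le 2 r_\ast^{k-1}d$, yields (\ref{tensor-ineq-1}) (residual $k$-factors absorbed into the constant, just as in Lemma \ref{lm:op-norm}).

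For (\ref{tensor-ineq-2}) I would argue entrywise. Since $\calD_i\bX=\calD_i(\calQ_{\bT}\bX)$, and in the golfing construction the argument passed to $\calD_i$ is always of the form $\calQ_{\bT}(\,\cdot\,)$, we may assume $\calQ_{\bT}\bX=\bX$ (for a fully general $\bX$ one replaces $\bX$ by $\calQ_{\bT}\bX$ and $\|\bX\|_{\max}$ by $\|\calQ_{\bT}\bX\|_{\max}$), so that $\calD_i\bX=d_*^k\bX(\omega_i)\,\calQ_{\bT} e_{\omega_i}-\bX$. Fix an entry $\omega$ and set $Z_i=(\calD_i\bX)(\omega)=d_*^k\bX(\omega_i)(\calQ_{\bT})_{\omega_i,\omega}-\bX(\omega)$; these are iid with $\E Z_i=0$. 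Using $|(\calQ_{\bT})_{\omega_i,\omega}|=|\langle\calQ_{\bT} e_{\omega_i},\calQ_{\bT} e_\omega\rangle|\le\max_\omega\|\calQ_{\bT} e_\omega\|_{\rm HS}^2=k\mu_* r_\ast^{k-1}d/d_*^k$ together with $\|\bX\|_{\max}\le 1$, one gets $|Z_i|\lesssim\mu_* r_\ast^{k-1}d$, and $\Var(Z_i)\le\E[(d_*^k\bX(\omega_i)(\calQ_{\bT})_{\omega_i,\omega})^2]=d_*^k\sum_{\omega'}\bX(\omega')^2(\calQ_{\bT})_{\omega',\omega}^2\le d_*^k\|\bX\|_{\max}^2\|\calQ_{\bT} e_\omega\|_{\rm HS}^2\lesssim\mu_* r_\ast^{k-1}d$. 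Scalar Bernstein then bounds $\P\{|\frac{1}{n_1}\sum_i Z_i|\ge\tau\}$ by $2\exp\bigl(-\frac{\tau^2/2}{1+2\tau/3}\cdot\frac{n_1}{\mu_* r_\ast^{k-1}d}\bigr)$, and a union bound over the $d_*^k$ entries $\omega$ supplies the prefactor $2d_*^k$, giving (\ref{tensor-ineq-2}).

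The step that needs genuine care rather than bookkeeping is the variance estimate in (\ref{tensor-ineq-1}): using the crude bound $\E\calD_i^2\preceq\E(\|\calD_i\|_{\rm HS\to HS}^2)\,\calQ_{\bT}$ would inflate the variance proxy to order $(\mu_* r_\ast^{k-1}d)^2$ and destroy the target sample-size scaling, so one must exploit the exact identity $\bA_i^2=d_*^k\|\calQ_{\bT} e_{\omega_i}\|_{\rm HS}^2\bA_i$ together with $\E\bA_i=\calQ_{\bT}$ to keep $\sigma^2$ linear in $\mu_* r_\ast^{k-1}d$. Everything else follows routinely once $\E[d_*^k\calP_{\omega_i}]=\calI$ and the normalization $d_*^k\max_\omega\|\calQ_{\bT} e_\omega\|_{\rm HS}^2=k\mu_* r_\ast^{k-1}d$ are in place.
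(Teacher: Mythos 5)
Your proof is correct and is precisely the standard argument the paper defers to (matrix Bernstein for (\ref{tensor-ineq-1}), scalar Bernstein plus a union bound over the $d_*^k$ entries for (\ref{tensor-ineq-2})); in particular you correctly isolate the structural facts $\E[d_*^k\calP_{\omega_i}]=\calI$, $\calP_\omega\calQ_{\bT}\calP_\omega=\|\calQ_{\bT}e_\omega\|_{\rm HS}^2\calP_\omega$, and hence $\bA_i^2=d_*^k\|\calQ_{\bT}e_{\omega_i}\|_{\rm HS}^2\bA_i$, together with the normalization $d_*^k\max_\omega\|\calQ_{\bT}e_\omega\|_{\rm HS}^2=k\mu_*r_*^{k-1}d$, which is exactly what keeps the variance proxy linear in $\mu_*r_*^{k-1}d$ rather than quadratic.

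Two bookkeeping clarifications are worth making explicit. First, the clean constants your computation yields put $k\mu_*r_*^{k-1}d$ in the exponent denominator (and a prefactor of order $k r_*^{k-1}d$), matching Lemma \ref{lm:op-norm}; the missing factors of $k$ in the displays of Lemma \ref{lm-tensor-ineq} are best read as a typo in the lemma statement, and your parenthetical about ``absorbing residual $k$-factors into the constant'' is slightly off the mark since the exponent as stated has no free constant to absorb them into. Second, you are right that (\ref{tensor-ineq-2}) implicitly requires $\calQ_{\bT}\bX=\bX$: both $|Z_i|$ and $\Var(Z_i)$ for $Z_i=(\calD_i\bX)(\omega)$ are controlled by $\|\calQ_{\bT}\bX\|_{\max}$, which is not bounded by $\|\bX\|_{\max}$ in general; since in the golfing scheme the iterates $\bW_{j-1}$ always lie in the range of $\calQ_{\bT}$, your reading of the lemma is the intended one.
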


Lemma \ref{lm-tensor-ineq} again follows from identical arguments used by Yuan and Zhang (2014) and the details are omitted for brevity. 

Let
$$\bW_j=(\calQ_{\bT}\calR_{j}\calQ_{\bT})\cdots(\calQ_{\bT}\calR_1\calQ_{\bT})\bW$$
with $\bW_0=\bW$. Since $\calR_j$s are iid operators with
$$\calQ_{\bT}\calR_1\calQ_{\bT}= - (1/n_1)\sum_{i=1}^{n_1}\calD_i,$$ 
Equation (\ref{tensor-ineq-1}) yields 
\bes%l{tensor-ineq-1a}
&& \P\left\{\|\bW_j\|_{\rm HS}\le \tau_1^{j}\|\bW\|_{\rm HS}, 1\le j\le n_2\right\}\\
&=& \P\left\{\|(\calQ_{\bT}\calR_j\calQ_{\bT})\cdots(\calQ_{\bT}\calR_1\calQ_{\bT})\bW\|_{\rm HS}\le \tau_1^j\|\bW\|_{\rm HS}, 1\le j\le n_2\right\}\\
&\ge&  1-n_22(r_\ast^{k-1}d)\exp\left(-\frac{\tau_1^2/2}{1+2\tau_1 /3}\left(\frac{n_1}{\mu_*r_\ast^{k-1}d}\right)\right).
\ees%l
This can be used to verify (\ref{dual-cert-1}) with certain $\tau_1$ satisfying 
$$
\tau_1^{n_2}\|\bW\|_{\rm HS}\le \frac{\sqrt{n/(2d_*^k)}}{k(k-1)},
$$
by taking
$$
n\ge n_1n_2\ge c_k(\beta+1)\mu_\ast r_\ast^{k-1}d\log^2(d).
$$

Finally, we prove (\ref{dual-cert-2}). It follows from (\ref{tensor-ineq-2}) that 
\bel{tensor-ineq-2a}\nonumber
&& \P\left\{\|\bW_j\|_{\max} = \|(\calQ_{\bT}\calR_{j}\calQ_{\bT})\cdots(\calQ_{\bT}\calR_1\calQ_{\bT})\bW\|_{\max} \le \tau^{j}\|\bW\|_{\max}, \ 1\le j\le n_2\right\} \\
&\ge&  1-2n_2d_*^k\exp\left(-\frac{\tau^2/2}{1+2\tau /3}\left(\frac{n_1}{\mu_*r_\ast^{k-1}d}\right)\right).
\eel
It follows from the definition of $\calR_j$ in (\ref{R_j}) that for any $\bX$ with $\calQ_{\bT}\bX=\bX$, 
$$
\calR_j\bX =  - \frac{1}{n_1}\sum_{i=(j-1)n_1+1}^{jn_1}\left((d_*^k)\calP_{\omega_i} - \calI \right)\bX. 
$$
Recall that
$$\|\bW\|_{\max}= \alpha_\ast(r_\ast/ d_*^k)^{1/2}.$$ 
Note that $\{\omega_i: (j-1)n_1< i\le jn_1\}$ is independent of $\bW_{j-1}$ and $\calQ_{\bT}\bW_{j-1}=\bW_{j-1}$. By Theorem \ref{con-th-2}, we have
\bes
&&\P\left\{\left\|\calR_j\bW_{j-1}\right\|_{\circ,\bdelta} > \tau^{j-1}t, \|\bW_{j-1}\|_{\max}/\tau^{j-1}\le \|\bW\|_{\max}\right\}\\
&\le& k^2d^{-\alpha}/2+(k^2(\log_2 d)^2/4)
\left\{\exp\left(-4kd\right) + \exp\left(-\sqrt{4kd(3\alpha+7)\log d}\right)\right\}\\
&=:&p_{n_1}(t).
\ees
We note that as $\delta_j=\sqrt{\lambda_\ast r_\ast/d_j}$ and 
$\alpha_\ast = (d_\ast^k/r_\ast)^{1/2}\|\bW_0\|_{\max}$, 
\bes
t &\ge& \frac{c_k^\prime}{n_1}(3\alpha+7)\sqrt{d\log d}(\lam_*r_*)^{k/2}\alpha_*r_*^{1/2}
\max_{1\le j_1<j_2\le k}\left\{(\lam_*r_*)^{-2}(\alpha+1)\left(n_1+d_{j_1}d_{j_2}\log d\right)\right\}^{1/2}
\cr &=& \frac{c_k^\prime}{n_1}\sqrt{d\log d_*}(\delta_*^kd_\ast^k\|\bW\|_{\max})
\max_{1\le j_1<j_2\le k}\left\{\left({n_1\over \delta_{j_1}^2d_{j_1}\delta_{j_2}^2d_{j_2}}
+\frac{\log d}{\delta_{j_1}^2\delta_{j_2}^2}\right)\right\}^{1/2}
\ees
with $c_k^\prime=2^k160$. 
%\bes
%&&\P\left\{\left\|\calR_j\bW_{j-1}\right\|_{\circ,\bdelta} > \tau^{j-1}t, \|\bW_{j-1}\|_{\max}/\tau^{j-1}\le \|\bW\|_{\max}\right\}\\% \le p_{n_1}\left(t\right),
%&\le&kd^{-\alpha}+2k^2\log^k(ed_\ast) \exp\left(k^2d\log(d_\ast)-{n_1t^2\over 2^{2k+4}k^2\alpha_\ast^2r_\ast\log^2(ed_\ast)}\right)\\
%&&+2k^2\log^k(ed_\ast) \exp\left(-{n_1t\over 2^{k+2}k\lambda_\ast^{k/2-1}\alpha_\ast r_\ast^{1/2}d\log(ed_\ast)\|\bA\|_{\max}}\right)\\
%&=:&p_{n_1}(t).
%\ees
%provided that
%$$
%t\ge {1\over n}(\alpha+1)2^{k+3}k^{5/2}\log^2(ed_\ast)\alpha_\ast r_\ast^{1/2}\lambda_\ast^{k/2-1}d^{3/2}\log d.
%$$
Together with (\ref{tensor-ineq-2a}), this yields
\bes
\nonumber
&& \P\left\{\left\|\sum_{j=1}^{n_2} \calR_j(\calQ_{\bT}\calR_{j-1}\calQ_{\bT})\cdots(\calQ_{\bT}\calR_{1}\calQ_{\bT})\bW\right\|_{\circ,\bdelta} < \frac{1}{k(k-1)}\right\}\\
\nonumber
&\ge &  \P\left\{\left\|\calR_j\bW_{j-1}\right\|_{\circ,\bdelta} < \frac{\tau^{j-1}-\tau^j}{k(k-1)}, \|\bW_{j-1}\|_{\max}/\tau^{j-1}\le \|\bW\|_{\max}, \ j\le n_2\right\} \\
&\ge & 1- n_2 p_{n_1}\left(\frac{1-\tau}{k(k-1)}\right) - 2n_2d_*^k \exp\left(-\frac{\tau^2/2}{1+2\tau /3}\left(\frac{n_1}{\mu_*r_\ast^{k-1}d}\right)\right),
\ees
which completes the proof.
\end{proof}
\vskip 15pt

\section{Concluding Remarks}
\label{sec:dis}
We introduce a general framework of nuclear norm minimization for tensor completion and investigate the minimum sample size required to ensure prefect recovery. Our work contributes to a fast-growing literature on higher order tensors, beyond matrices. In particular, we argue that incoherence may play a more prominent role in higher order tensor completion. We show that, by appropriately incorporating information about the incoherence of a $k$th order tensor of rank $r$ and dimension $d\ktimes d$, we can complete it with $O((r^{(k-1)/2}d^{3/2}+r^{k-1}d)(\log(d))^2)$ uniformly sampled entries. This sample size requirement agrees with existing results on recovering a third order tensor (see, e.g., Yuan and Zhang, 2014), and more interestingly, it depends on $k(\ge 3)$ only through the $O(1)$ factor for rank one tensors ($r=1$).

One of the chief challenges when dealing with higher order tensors is computation. Although convex, nuclear norm minimization for higher order tensors is computationally expensive in the worst case. See, e.g., Hillar and Lim (2013). Various relaxations and approximate algorithms have been introduced in recent years to alleviate the computational burden associated with evaluating tensor norms. See, e.g., Nie and Wang (2014), Jiang, Ma and Zhang (2015) and references therein. It is of great interest to study how these techniques can be adopted in the context of tensor completion in general, and nuclear norm minimization in particular. More detailed investigation along this direction is beyond the scope of the current work and we hope to report our progress elsewhere in the near future. Nevertheless, our results here may provide valuable guidance along this direction. For example, our analysis suggests that when developing effective approximation algorithms for higher order tensor completion, it could tremendously beneficial to explicitly take incoherence into account.

\appendix

\section{Proof of Lemma \ref{lm-A}}
It suffices to prove the lemma for $c=1$. 
Consider without loss of generality $\ba$ and $\bu$ with nonnegative components, 
$\|\bu\|_{\ell_2} = 1$ and $\|\bu\|_{\ell_\infty}\le\delta$. Let 
\bes
\bv = (u_1\vee d^{-1/2},\ldots,u_d\vee d^{-1/2})^\top/\sqrt{2},
\ees 
where $a\vee b=\max\{a,b\}$. We have 
$$\|\bv\|_{\ell_\infty} \le \delta/\sqrt{2}, \qquad \sqrt{2}\bv^\top\ba \ge \bu^\top\ba,$$
and
$$\|\bv\|_{\ell_2}^2 = 2^{-1}\sum_{i=1}^d \max(u_i^2,1/d)\le 1.$$
Let 
\bes
\bw\in\{2^{j/2}/\sqrt{2d}, j=0,\ldots,m\}^d\ 
\qquad\hbox{ with }\ w_i \le v_i \le \sqrt{2}w_i,\ \forall\ i=1,\ldots,d. 
\ees
This is possible as
$$\|\bv\|_{\ell_\infty}\le \delta/\sqrt{2}\le \sqrt{2}(2^{m/2}/\sqrt{2d}).$$ 
We have
$$\|\bw\|_{\ell_2}\le \|\bv\|_{\ell_2}\le 1\qquad {\rm and} \qquad 2\bw^\top\ba \ge \sqrt{2}\bv^\top\ba\ge \bu^\top\ba.$$

It remains to count the cardinality. 
Let $\ell_j = \lfloor d/(2^j-1)\rfloor$. For $1\le j\le m$, 
\bes
(2^j/(2d))\left|\{i: w_i^2 = 2^{j}/(2d)\}\right|+ (2d)^{-1}\left[d - \left|\{i: w_i^2 = 2^{j}/(2d)\}\right|\right]\le 1, 
\ees
so that
$$|\{i: w_i^2 = 2^{j}/(2d)\}|\le \ell_j.$$ 
As a choice of $\bw$ can be made by first picking the sign of its elements, 
the cardinality of the $\bw$-collection is no greater than 
\bes
N = 2^{d}\prod_{j=1}^{m}\sum_{0\le \ell \le \ell_j}{d\choose \ell}. 
\ees
Moreover, for $j\ge 2$, we have $\ell_j \le d/(2^j-1)$, so that 
\bes
\sum_{\ell =1}^{\ell_j}{d\choose \ell} 
\le {d\choose \ell_j}\sum_{\ell=0}^{\ell_j}\Big(\frac{1/(2^j-1)}{1-1/(2^j-1)}\Big)^{\ell_j-\ell}
\le {d\choose \ell_j}\left(1+\frac{1}{2^j-3}\right). 
\ees 
It follows with an application of the Stirling formula that 
\bes
N \le 4^d\exp\left\{\sum_{j=2}^{m}\Big(\ell_j\log(ed/\ell_j)+ \frac{1}{2^j-3}\Big)\right\}. 
\ees
Since $x(1+\log(d/x))$ is increasing in $x$ for $0\le x\le d$ and $\ell_j\le d/(2^{j}-1)$,
\bes
\log N \le d\log 4+d \sum_{j=2}^\infty \frac{1+\log(2^j-1)}{2^j-1} +\sum_{j=2}^{\infty}\frac{1}{2^j-3}  
\le 3.082 \times d + 1.344.
\ees
The proof is now completed.

\section{Proof of Lemma \ref{prop-dual-cert}}
Let $\bDelta = \hbT - \bT$. Then $\calP_\Omega \bDelta=0$ and
$$
\|\bT+\bDelta\|_{\star,\bdelta}\le \|\bT\|_{\star,\bdelta}.
$$
It follows from Theorem \ref{tensor-prop-2} that  
$$
\|\bT+\bDelta\|_{\star,\bdelta} \ge \|\bT\|_{\star,\bdelta} + \frac{\|\calQ_{\bT}^\perp\bDelta\|_{\star,\bdelta}}{k(k-1)/2}+\langle \bW_0,\bDelta\rangle. 
$$
Because $\calQ_{\bT}\bW_0=\bW_0$ and
$$\langle\tbG,\bDelta\rangle = \langle\calP_\Omega\tbG,\bDelta\rangle = \langle\tbG,\calP_\Omega\bDelta\rangle=0$$
we get
\bes
- \frac{\|\calQ_{\bT}^\perp\bDelta\|_{\star,\bdelta}}{k(k-1)/2} &\ge& \langle \bW_0-\tbG,\bDelta\rangle \\
&=&\langle \calQ_{\bT}(\bW_0-\tbG), \bDelta\rangle- \langle\tbG, \calQ_{\bT}^\perp\bDelta\rangle\\
&\ge&-\|\bW_0-\calQ_{\bT}\tbG\|_{\rm HS}\|\calQ_{\bT}\bDelta\|_{\rm HS}-\|\calQ_{\bT}^\perp\bDelta\|_{\star,\bdelta}/\{k(k-1)\}.
\ees
It follows that 
$$
\|\calQ_{\bT}^\perp\bDelta\|_{\star,\bdelta}/\{k(k-1)\}\le \|\bW_0-\calQ_{\bT}\tbG\|_{\rm HS}\|\calQ_{\bT}\bDelta\|_{\rm HS}.
$$

Recall that
$$\calP_\Omega\bDelta=\calP_\Omega\calQ_{\bT}^\perp\bDelta+\calP_\Omega\calQ_{\bT}\bDelta=0.$$ 
Thus, in view of (\ref{eq:proj-on-T}) and Proposition \ref{tensor-prop-1}
\bel{eq:hsbd}
 \frac{\|\calQ_{\bT}\bDelta\|_{\rm HS}}{\sqrt{2d_*^k/n}}
 \le \|\calP_\Omega\calQ_{\bT}\bDelta\|_{\rm HS}=\|\calP_\Omega\calQ_{\bT}^\perp\bDelta\|_{\rm HS}
 \le \|\calQ_{\bT}^\perp\bDelta\|_{\rm HS}
 \le \|\calQ_{\bT}^\perp\bDelta\|_{\star,\bdelta}. 
\eel
Consequently, 
$$
\frac{\|\calQ_{\bT}^\perp\bDelta\|_{\star,\bdelta}}{k(k-1)}\le \sqrt{2d_*^k/n}\|\bW_0-\calQ_{\bT}\tbG\|_{\rm HS}\|\calQ_{\bT}^\perp\bDelta\|_{\star,\bdelta}.
$$
Because of (\ref{dual-cert-a}), we have $\|\calQ_{\bT}^\perp\bDelta\|_{\star,\bdelta}=0$. Together with (\ref{eq:hsbd}), we conclude that $\bDelta=0$, or equivalently $\hbT=\bT$.
\end{document}